\numberwithin{equation}{section}
\theoremstyle{plain}
\newtheorem{lemma}{Lemma}[section]
\newtheorem{theorem}[lemma]{Theorem}
\newtheorem*{introthm}{Theorem}
\newtheorem*{introconj}{Conjecture}
\newtheorem{proposition}[lemma]{Proposition}
\newtheorem{corollary}[lemma]{Corollary}
\newtheorem{conjecture}[lemma]{Conjecture}
\theoremstyle{definition}
\newtheorem{definition}[lemma]{Definition}
\theoremstyle{remark}
\newtheorem{remark}[lemma]{Remark}
\newtheorem{notation}[lemma]{Notation}
\newcommand{\cc}{\mathbb{C}}
\newcommand{\pp}{\mathbb{P}}
\newcommand{\nn}{\mathbb{N}}
\newcommand{\zz}{\mathbb{Z}}
\newcommand{\ls}{\mathcal{H}}
\newcommand{\Osh}{{\mathcal O}}
\newcommand{\Div}{\operatorname{Div}}
\newcommand{\Pic}{\operatorname{Pic}}
\newcommand{\Bs}{\operatorname{Bs}}
\begin{document}

\title[Standard classes on the blow-up of $\pp^n$]{Standard classes on the blow-up of $\pp^n$ at points in very general position}
\author{Antonio Laface}
\address{
Departamento de Matem\'atica \newline
Universidad de Concepci\'on \newline 
Casilla 160-C \newline
Concepci\'on, Chile}
\email{antonio.laface@gmail.com}

\author{Luca Ugaglia}
\address{
Dipartimento di Matematica \newline
Politecnico di Torino\newline
C.so Duca degli Abruzzi, 24\newline
10129 Torino, Italy}
\email{luca.ugaglia@gmail.com}

\keywords{Linear systems, fat points} \subjclass[2000]{14C20}
\begin{abstract}
We study conjectures on the dimension of linear systems on the blow-up of $\pp^2$ and $\pp^3$ at points in very general position. We provide algorithms and Maple code based on these conjectures.
\end{abstract}
\maketitle

\section*{Introduction}
In this note we consider classes $D\in\Pic(X)$, where $X$ is the blow-up of $\pp^n$ at $r$ points in very general position. We recall that the dimensions of the cohomology groups of any line bundle whose class is $D$ do not depend on the choice of the representative. We will denote these dimensions with $h^i(D)$. Given $D:=dH-\sum_im_iE_i$, where $H$ is the pull-back of the class of a hyperplane and the $E_i$'s are the classes of exceptional divisors, it is not hard to see (Proposition~\ref{h^2}) that if $d>0$ and $m_i\geq 0$, then $h^i(D)=0$ for any $i\geq 2$. Thus by the Riemann-Roch theorem $h^0(D)-h^1(D)=\chi(D)$, where the right hand side depends only on the numerical properties of $D$. We say that $D$ is {\em non-special} if
\[
  h^0(D)~h^1(D)=0
\]
and {\em special} otherwise. If $D$ is an effective class, i.e. $h^0(D)>0$, then it is non-special if and only if $h^0(D)=\chi(D)$. Thus the {\em expected dimension} of $D$ is $\max\{\chi(D),0\}$.
The aim of this note is to discuss two conjectures about special classes in dimension 2 and 3.

In Section 1 we introduce a quadratic form on $\Pic(X)$ which will be useful to describe the action of some birational automorphisms of $X$ on $\Pic(X)$. The study of these maps, called small  modifications, will be done in Section 2, while Section 3 will be devoted to the notions of pre-standard and standard forms for a class $D$. 
In Section 4 we introduce $(-1)$-classes and we study their properties with respect to the quadratic form. Section 5 contains the proof of the equivalence of two conjectures about special classes in dimension 2, one of these conjectures is the well-known S.H.G.H. conjecture, while the other is formulated in terms of standard classes.
\begin{introthm}
Let $X$ be the blow-up of $\pp^2$ at a finite number of points in very general position. The following two statements are equivalent:
\begin{enumerate}
\item an effective class $D$ is special if and only if there exists a $(-1)$-curve $E$ such that $D\cdot E\leq -2$;
\item an effective class in standard form is non-special.
\end{enumerate}
\end{introthm}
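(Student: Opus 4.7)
The plan is to use two ingredients from earlier sections: the small modifications of Section 2, which are birational self-maps of $X$ whose induced action on $\Pic(X)$ preserves the intersection form, permutes the $(-1)$-classes, and preserves $h^0$ and $h^1$ (hence effectiveness and speciality); and the reduction-to-standard-form algorithm of Section 3. A preliminary observation used in both directions is that a $(-1)$-curve $E$ with $D\cdot E\leq -2$ forces $D$ to be special: restricting to $E\cong\pp^1$ in the exact sequence
\[
0\to\Osh_X(D-E)\to\Osh_X(D)\to\Osh_E(D)\to 0
\]
gives $h^1(\Osh_E(D))=-D\cdot E-1\geq 1$, and the long exact sequence (together with the $h^2$ vanishing recorded in the introduction) forces $h^1(D)\geq 1$.

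For $(1)\Rightarrow(2)$, let $D=dH-\sum m_iE_i$ be effective and in standard form, so that $m_1\geq\cdots\geq m_r\geq 0$ and $d\geq m_1+m_2+m_3$. These inequalities immediately give $D\cdot E\geq 0$ for every \emph{obvious} $(-1)$-class, namely $E_i$ and $H-E_i-E_j$. For an arbitrary $(-1)$-curve $E'$, the classification from Section 4 should place $E'$ in the orbit of an obvious $(-1)$-class under the small-modifications group; applying such a sequence simultaneously to $D$ and $E'$ preserves their intersection product, and one verifies that the transformed class remains in a standard form (up to reordering of the points). Hence $D\cdot E'\geq 0>-2$, and hypothesis (1) forces $D$ to be non-special.

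For $(1)\Leftarrow(2)$, the preliminary observation takes care of one direction. Conversely, let $D$ be effective and special. Running the algorithm of Section 3 on $D$, and using that small modifications preserve both effectiveness and speciality, a successful termination in a standard form $D'$ would produce an effective special class in standard form, contradicting (2). The only possibility is that at some intermediate step the current class $D_k$ satisfies $D_k\cdot E\leq -2$ for some $(-1)$-curve $E$; pulling $E$ back through the small modifications already performed yields a $(-1)$-curve $\tilde E$ on the original $X$ with $D\cdot\tilde E\leq -2$, as required.

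The main obstacle lies in the $(1)\Rightarrow(2)$ direction: proving that every $(-1)$-curve can be moved to an obvious $(-1)$-class by a sequence of small modifications while keeping the companion class $D$ in (a possibly reordered) standard form. This is a Weyl-group-type transitivity statement on the $(-1)$-orbit combined with careful control of the reduction, and it is where the combinatorial core of the argument sits.
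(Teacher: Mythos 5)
Your overall architecture matches the paper's: both directions hinge on (a) the fact that a class in standard form meets every $(-1)$-class non-negatively, and (b) the reduction of an arbitrary effective class to pre-standard form via $W(X)$. Your preliminary observation (that a $(-1)$-curve $E$ with $D\cdot E\le -2$ forces $h^1(D)>0$) is correct and is in fact a point the paper leaves implicit. However, both of your key steps have genuine gaps.

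In $(1)\Rightarrow(2)$ you reduce to showing $D\cdot E'\ge 0$ for every $(-1)$-curve $E'$ when $D$ is standard, and you propose to move $E'$ to an ``obvious'' class by some $w\in W(X)$ while ``verifying that the transformed class remains in a standard form (up to reordering).'' That verification cannot be carried out: standard form is not preserved by $W(X)$, and for the $w$ sending $E'$ to some $E_i$ the class $w(D)$ will in general acquire negative multiplicities or violate $d\ge m_1+m_2+m_3$. You correctly flag this as the combinatorial core, but it is precisely the content of Theorem~\ref{standard}, and the paper's proof runs in the opposite direction: it keeps $D$ fixed and inducts on the degree of the $(-1)$-class $E$. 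After reordering the multiplicities of $E$ (which can only decrease $D\cdot E$), Lemma~\ref{-1deg} gives $E\cdot F<0$, so $\sigma(E)$ is a $(-1)$-class of strictly smaller degree, while
\[
D\cdot\sigma(E)-D\cdot E=(E\cdot F)(D\cdot F)\le 0
\]
because $D\cdot F\ge 0$ for standard $D$; hence $D\cdot E\ge D\cdot\sigma(E)\ge 0$ by induction. Without this (or an equivalent) argument your first implication is not proved.

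In $(2)\Rightarrow(1)$ your dichotomy is too coarse. The algorithm of Proposition~\ref{pre} always terminates in a \emph{pre}-standard form $D'=d'H-\sum_i m_i'E_i$; the failure mode is not ``some intermediate $D_k$ meets a $(-1)$-curve in $\le -2$'' but simply ``$m_i'<0$ for some $i$,'' which a priori only yields a $(-1)$-curve $\tilde E$ on $X$ with $D\cdot\tilde E=m_i'\le -1$. The case $D\cdot\tilde E=-1$ is not covered by your argument and does not by itself produce speciality. The paper disposes of it at the outset: if $D\cdot E=-1$ for a $(-1)$-curve $E$, then $h^0(D)=h^0(D-E)$ and $\chi(D)=\chi(D-E)$, so one may replace $D$ by $D-E$ and assume $D\cdot E\ge 0$ for all $(-1)$-curves; only under that assumption does the pre-standard form have non-negative multiplicities and hence become standard, allowing (2) to be applied. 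You need to insert this reduction for your converse to close.
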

Section 6 deals with a conjecture about special classes in dimension 3. Here we recall the following (the complete formula for $h^0(D)$ is given in Conjecture~\ref{con1}).
\begin{introconj}
Let $X$ be the blow-up of $\pp^3$ at a finite number of points in very general position and let $D:=dH-\sum_im_iE_i$ be in standard form.
\begin{enumerate}
\item If $q(D)\leq 0$, then $h^0(D)=h^0(D-Q)$.
\item If $q(D) > 0$ then $D$ is special if and only if $d<m_1+m_2-1$.
\end{enumerate}
\end{introconj}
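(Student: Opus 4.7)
The plan is to handle the two parts separately, using the framework of $(-1)$-classes and standard forms developed in Sections 3--4, together with the quadratic form $q$ from Section 1.

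For part (1), my approach is to show that the quadric $Q$ through the blown-up points is a fixed component of $|D|$ whenever $q(D)\leq 0$, so that $h^0(D)=h^0(D-Q)$ by splitting off $Q$. The key tool is the restriction sequence
\[
0 \to \Osh_X(D-Q) \to \Osh_X(D) \to \Osh_{\tilde Q}(D|_{\tilde Q}) \to 0,
\]
where $\tilde Q$ denotes the strict transform of a general quadric containing the points. I would argue that under $q(D)\leq 0$ the restriction $D|_{\tilde Q}$ has no sections. Since a smooth quadric in $\pp^3$ is $\pp^1\times\pp^1$, one reads off $H^0$ from the bidegree of $D|_{\tilde Q}$, and the standard-form assumption combined with $q(D)\leq 0$ should force at least one bidegree to be strictly negative. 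Combining with Proposition~\ref{h^2} to kill $H^1$ of $D-Q$ where necessary closes the argument.

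For part (2), the direction $d<m_1+m_2-1\Rightarrow D$ special should come from the line $\ell_{12}$ through the two highest-multiplicity points: its strict transform $\tilde\ell_{12}$ satisfies $D\cdot\tilde\ell_{12}=d-m_1-m_2$, and when this is less than $-1$ the line enters the base locus with multiplicity at least two. A local cohomology calculation on a neighbourhood of $\tilde\ell_{12}$ then produces a nonzero $h^1(D)$, while the hypothesis $q(D)>0$ rules out the trivial vanishing $h^0=h^1=0$. The converse, that non-speciality holds when $d\geq m_1+m_2-1$ and $q(D)>0$, is the genuinely hard direction. I would attempt a Horace-style degeneration, specialising an appropriate subset of the points to a fixed plane and running induction on $r$ (and on $d$) using the residual--trace exact sequence; the standard-form hypothesis is precisely what should guarantee that after specialisation both the trace and the residual systems remain non-special by induction.

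The main obstacle is this converse direction of part (2). The landscape of fixed components in $\pp^3$ is richer than in $\pp^2$ — both the line $\ell_{12}$ and the quadric $Q$ can contribute — and there is no Cremona-based reduction paralleling the planar SHGH framework. Even granting the two-dimensional SHGH conjecture, controlling how a Horace degeneration interacts with the quadric obstruction from part (1) is delicate, since the specialisation can produce new classes whose $q$-value crosses zero, mixing the two regimes of the conjecture. I expect the argument to stall precisely at this interplay, which is why the statement is posed as a conjecture rather than a theorem.
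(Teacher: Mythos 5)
The statement you are addressing is posed in the paper as a conjecture (Conjecture~\ref{con1}); the paper proves neither direction of part (2), and it establishes part (1) only conditionally on the planar S.H.G.H.\ conjecture for $10$ points. So no complete argument is expected here, but your sketch of part (1) contains a concrete error. You propose to read off $h^0(D|_{\tilde Q})$ from the bidegree of $D|_{\tilde Q}$ on $\pp^1\times\pp^1$ and to argue that $q(D)\leq 0$ forces a negative bidegree. That cannot work: the bidegree of $D|_{\tilde Q}$ is $(d,d)$, which is never negative for an effective $D$; what kills the sections is the nine imposed points of multiplicities $m_1,\dots,m_9$ lying on the quadric. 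The surface $\tilde Q$ is the blow-up of $\pp^1\times\pp^1$ at nine very general points, i.e.\ of $\pp^2$ at ten points, and $q(D)=\chi(D|_{\tilde Q})\leq 0$ yields $h^0(D|_{\tilde Q})=0$ only after one knows $h^0\cdot h^1=0$ for this restricted class. The paper obtains that by (i) checking that $D|_{\tilde Q}$, possibly after a Cremona transformation and a reordering of multiplicities, is in standard form on the ten-point blow-up, hence meets every $(-1)$-curve non-negatively by Theorem~\ref{standard}, and (ii) invoking the S.H.G.H.\ conjecture for ten very general points. Without (ii) even part (1) remains open; without (i) your claimed vanishing has no justification. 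Proposition~\ref{h^2} does not help at this step, since it controls higher cohomology on $X$ and says nothing about $h^1$ of the restriction to the surface $\tilde Q$.

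For part (2) your diagnosis is essentially correct: the speciality for $d<m_1+m_2-1$ is expected to come from the lines through pairs of points with $m_i+m_j\geq d+2$ (whence the correction terms $\binom{m_i+m_j-d+1}{3}$ in Conjecture~\ref{con1}), and the converse is the open content of the conjecture. Two caveats. First, in this paper ``special'' means $h^0(D)\,h^1(D)\neq 0$, so even the ``if'' direction requires producing a nonzero $h^0$ in addition to a nonzero $h^1$; the hypothesis $q(D)>0$ does not by itself exclude $h^0(D)=0$, and the paper does not claim a proof of this implication either. Second, a Horace-style degeneration is a reasonable strategy, but the paper offers no such argument: its actual contributions are the reduction to standard form, the algorithms, and the conditional quadric-splitting statement of part (1). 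Your proposal should therefore be presented as a plan of attack and a discussion of obstructions, not as a proof, since none exists in the paper.
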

Finally Section 7 contains examples of calculation of $h^0(D)$ for some $D$.
Several Maple programs will help the reader to concretely use the described algorithms for the calculation of $h^0(D)$ according to the proposed conjectures.

\section{Basic setup}
Let us first recall some definitions and fix some notations.
\subsection{Points in very general position}
Let $p_1,\dots, p_r$ be distinct points of $\pp^n$ and let $m\in\nn^r$.
Consider the Hilbert scheme $(\pp^n)^{[r]}$ parametrizing $r$-tuples of points in $\pp^n$ and let 
$\mathcal{P}\in (\pp^n)^{[r]}$ be the point corresponding to the $p_i$'s.
Denote by $\ls(d,m,\mathcal{P})$ the vector space of degree $d$ homogeneous polynomials of $\cc[x_0,\dots,x_n]$ with multiplicity at least $m_i$ at each $p_i$.
Observe that $\dim\ls(d,m,\mathcal{P})$ depends on $\mathcal{P}$ and that there is an open Zariski subset $\mathcal{U}(d,m)\subseteq (\pp^n)^{[r]}$ where this dimension attains its minimal value. Let us denote by
\begin{equation}\label{U}
  \mathcal{U} := \bigcap_{(d,m)\in\nn^{r+1}}\mathcal{U}(d,m).
\end{equation}
\begin{notation}
From now on we will say that the points $p_1,\dots,p_n\in\pp^n$ are in 
{\em very general position} if the corresponding $\mathcal{P}$ is in $\mathcal{U}$ 
(which is the complementary of a countable union of Zariski closed subspaces of the configuration space).\\
Moreover, given $p_1,\dots,p_r\in\pp^n$ in 
very general position, we will denote by $\pi: X\to\pp^n$  the blow-up of $\pp^n$ at the $p_i$'s, with exceptional divisors $E_1,\dots,E_r$ and by $H$ the pull-back of a hyperplane of $\pp^n$. 
\end{notation}
\begin{proposition}\label{h^2}
Let $D:=dH-\sum_im_iE_i$ with $d>0$ and $m_i\geq 0$. Then $h^i(D)=0$ for any $i>1$.
\end{proposition}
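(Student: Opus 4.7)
My plan is to relate the cohomology on $X$ to the cohomology of an ideal sheaf on $\pp^n$ via the Leray spectral sequence for $\pi$. Concretely, I claim that $R^j\pi_*\Osh_X(D)=0$ for all $j\geq 1$, and that $\pi_*\Osh_X(D)\cong \mathcal{I}_Z(d)$, where $Z\subset\pp^n$ is the fat-points subscheme defined by $\sum_i \mathcal{I}_{p_i}^{m_i}$. Granting these, Leray degenerates and gives $h^i(X,D)=h^i(\pp^n,\mathcal{I}_Z(d))$ for every $i$, so it suffices to prove vanishing of the latter for $i\geq 2$.

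For the vanishing of higher direct images, I would work locally near each $p_i$, since $\pi$ is an isomorphism away from $\bigcup E_i$ and the exceptional divisors are disjoint. By the projection formula applied to $D=\pi^*\Osh_{\pp^n}(d)\otimes\Osh_X(-\sum m_iE_i)$, the task reduces to $R^j\pi_*\Osh_X(-mE_i)=0$ for $j\geq 1$ and $m\geq 0$. The case $m=0$ is the standard fact that blow-ups of smooth points have no higher direct images of the structure sheaf. For $m\geq 1$, I would induct using the short exact sequence
$$0\to \Osh_X(-(m+1)E_i)\to \Osh_X(-mE_i)\to \Osh_{E_i}(m)\to 0,$$
where I use $E_i\cong\pp^{n-1}$ and $\Osh_X(-E_i)|_{E_i}\cong\Osh_{\pp^{n-1}}(1)$. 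Since $E_i$ is contracted to a point, $R^j\pi_*\Osh_{E_i}(m)=H^j(\pp^{n-1},\Osh(m))$, which vanishes for $j\geq 1$ whenever $m\geq 0$ (the only possibly nonzero term would be $H^{n-1}$, but this also vanishes for $m\geq 0$). The identification $\pi_*\Osh_X(D)\cong \mathcal{I}_Z(d)$ then follows by the projection formula together with the local statement that sections of $\Osh_X(-\sum m_iE_i)$ correspond to functions vanishing to order at least $m_i$ at each $p_i$.

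With this reduction in hand, I plug into the ideal-sheaf sequence
$$0\to \mathcal{I}_Z(d)\to \Osh_{\pp^n}(d)\to \Osh_Z(d)\to 0$$
and take cohomology. Because $Z$ is zero-dimensional, $H^i(\Osh_Z(d))=0$ for $i\geq 1$; because $d>0$, the Bott/Serre-duality vanishings give $H^i(\pp^n,\Osh(d))=0$ for all $1\leq i\leq n$. Splicing these into the long exact sequence yields $H^i(\pp^n,\mathcal{I}_Z(d))=0$ for $2\leq i\leq n$, and the range $i>n$ is trivial for dimension reasons.

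The only genuinely technical step is the vanishing of the $R^j\pi_*$; everything else is a direct application of standard long exact sequences. The inductive argument above is the cleanest route, but an alternative would be to invoke Kawamata-Viehweg on the smooth blow-up, as $-\sum m_iE_i$ is $\pi$-nef for $m_i\geq 0$ (its intersection with any fibral curve in $E_i$ is non-negative), and relative Kawamata-Viehweg immediately gives the desired higher direct image vanishing.
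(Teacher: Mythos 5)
Your argument is correct, but it takes a genuinely different route from the paper. The paper stays entirely on $X$ and runs a double induction: first on $d$ via the restriction sequence $0\to \Osh_X\to\Osh_X(H)\to\Osh_{\pp^{n-1}}(1)\to 0$ for a general hyperplane, then on the total multiplicity $\sum_i m_i$ via $0\to\Osh_X(D)\to\Osh_X(D+E_1)\to\Osh_{\pp^{n-1}}(m_1-1)\to 0$, so that everything reduces to the cohomology of line bundles on $\pp^{n-1}$ and no direct images are needed. You instead push down to $\pp^n$ via Leray, which requires the (standard but not free) facts $R^j\pi_*\Osh_X(-\sum_i m_iE_i)=0$ for $j\geq 1$ and $\pi_*\Osh_X(D)\cong\mathcal{I}_Z(d)$; in exchange you get the stronger conclusion $h^i(X,D)=h^i(\pp^n,\mathcal{I}_Z(d))$ for \emph{all} $i$, which is exactly the identification underlying the space $\ls(d,m,\mathcal{P})$ used in Section 1, whereas the paper's induction only yields the stated vanishing. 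One step in your inductive computation of the higher direct images deserves care: in the long exact sequence of $R^j\pi_*$ applied to $0\to\Osh_X(-(m+1)E_i)\to\Osh_X(-mE_i)\to\Osh_{E_i}(m)\to 0$, the vanishing of $R^1\pi_*\Osh_X(-(m+1)E_i)$ requires not only $R^1\pi_*\Osh_X(-mE_i)=0$ but also the surjectivity of $\pi_*\Osh_X(-mE_i)\to\pi_*\Osh_{E_i}(m)$; this does hold, since under the identification $\pi_*\Osh_X(-mE_i)\cong\mathcal{I}_{p_i}^{m}$ (which you invoke anyway) the map is the canonical surjection $\mathcal{I}_{p_i}^{m}\to\mathcal{I}_{p_i}^{m}/\mathcal{I}_{p_i}^{m+1}\cong \mathrm{Sym}^m(\mathcal{I}_{p_i}/\mathcal{I}_{p_i}^2)=H^0(\pp^{n-1},\Osh(m))$, and your alternative via relative Kawamata--Viehweg also closes this point, at the cost of a much heavier tool than the elementary sequences the paper uses.
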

\begin{proof}
By abuse of notation denote by $H$ a general representative of the class $H$. 
Consider the exact sequence of sheaves
\[
  \xymatrix@1{
  0\ar[r] & \Osh_X\ar[r] & \Osh_X(H)\ar[r] & \Osh_{\pp^{n-1}}(1)\ar[r] & 0.
  }
\]
Since $X$ is rational, we have $h^i(\Osh_X)=0$ for $i>0$. Thus, taking cohomology, we get $h^i(H)=h^i(\Osh_{\pp^{n-1}}(1))=0$ for $i>0$. Now assume by induction that $h^i(d'H)=0$ for $d'<d$ and $i>0$. Tensoring the preceding sequence with $\Osh_X((d-1)H)$ and taking cohomology one gets $h^i(dH)=h^i(\Osh_{\pp^{n-1}}(d))=0$
for $i>0$ by~\cite[\S III Thm. 5.1]{Ha}. 

We now proceed by induction on $m:=\sum_im_i$. If $m=0$ we have already proved the statement. Suppose it is true for $m'<m$ and let us prove it for $m$. 
We can assume $m_1>0$. Consider the exact sequence of sheaves
\[
  \xymatrix@1{
  0\ar[r] & \Osh_X(D)\ar[r] & \Osh_X(D+E_1)\ar[r] & \Osh_{\pp^{n-1}}(m_1-1)\ar[r] & 0.
  }
\]
By~\cite[\S III Thm. 5.1]{Ha} we have $h^i(\Osh_{\pp^{n-1}}(m_1-1))=0$ for $i>0$.
By induction hypothesis $h^i(D+E_1)=0$ for $i>1$.  Thus we get the thesis.
\end{proof}
\begin{remark}\label{mi<0}
If $D=dH-\sum m_iE_i\in\Pic(X)$ is effective and $m_i < 0$ for some $i$, then $E_i\subset\Bs|D|$. 
In fact, if we denote by $e_i$ the class of a line in $E_i$, we have the following intersection products: $e_i E_j=-\delta_{i,j},\ e_i H=0$. Therefore $D e_i =m_i < 0$, which implies that $e_i$ is contained in $\Bs|D|$ and, since $e_i$ spans the whole $E_i$, we get the claim.
\end{remark}

\subsection{A quadratic form}
Consider the quadratic form on $\Pic(X)$ whose matrix with respect to the basis $H$, $E_1,\dots,E_r$ is diagonal with 
\[
  H^2=n-1
  \qquad
  E_1^2=\dots=E_r^2=-1.
\]
From now on $D_1\cdot D_2$ will denote the value of the corresponding bilinear form defined by the quadratic form. Observe that the lattice $(\Pic(X),\cdot)$ has determinant $\pm(n-1)$, so that it is unimodular if and only if $n=2$ in which case it coincides with the Picard lattice of $X$. 

\begin{definition}
Let $R\in\Pic(X)$ with $R^2=-2$. The {\em reflection} defined by $R$ is the $\zz$-linear map:
\[
  \sigma_R:\Pic(X)\to\Pic(X)
  \qquad
  D\mapsto D+(D\cdot R)R.
\]
\end{definition}

Observe that $\sigma_R$ is the reflection in $\Pic(X)$ with respect to the hyperplane orthogonal to $R$.
We will denote by 
\[
  F:=H-E_1-\dots-E_{n+1}
  \qquad
  F_i:=E_i-E_{i+1},\text{ for }1\leq i\leq r-1
\]
\begin{definition}
We consider the following subgroups of $\zz$-linear isometries of $\Pic(X)$ defined by:
\[
  S(X) := \langle\sigma_i : 1\leq i\leq r-1\rangle
  \qquad
  W(X) := \langle\sigma, S(X)\rangle,
\]
where $\sigma_i:=\sigma_{F_i}$ and $\sigma:=\sigma_F$. 
\end{definition}


\begin{remark}
Observe that $S(X)\cong S_r$, the group of permutations on $r$ elements,
since $\sigma_k$ corresponds to the transposition $(k,k+1)$. The group $W(X)$ is not necessarily finite. The class $K:=\frac{1}{n-1}K_X$ is $W(X)$-invariant with $K\cdot F=K\cdot F_i=0$ for any $i$. Thus, since the quadratic form has signature $(1,r-1)$, in case $K^2=n+3+\frac{4}{n-1}-r>0$, the restriction to $K^{\bot}$ is negative definite, and $W(X)$ is the Weyl group of the lattice $(K^{\bot},\cdot)$. The following table describes $(K^{\bot},\cdot)$ for all the values of $n$ and $r$ such that $K^2>0$ (see~\cite{DV}, \cite{Do} and \cite{Mu} for a detailed discussion of these lattices).
Observe that a set of simple roots for the lattice $(K^{\bot},\cdot)$ is always given by $F$ and the $F_i$'s.

\begin{center}
\begin{tabular}{r|r|l}
$n$ & $r$ & $K^{\bot}$\\
\hline
$\geq 2$ & $\leq n+2$ & $A_r$\\
$\geq 2$ & $n+3$ & $D_{n+3}$\\
4 & 8 & $E_8$\\
3 & 7 & $E_7$\\
2 & 6 & $E_6$\\
2  & 7 & $E_7$\\
2  & 8 & $E_8$
\end{tabular}
\end{center}
\end{remark}



\section{Small  modifications}
The aim of this section is to relate the elements of $W(X)$ with some birational maps of $X$. In order to do that we first recall the following definition. 

\begin{definition}\label{def:sqm}
A {\em small  modification} $\varphi: X_1\dashrightarrow X_2$ is a birational map which is an isomorphism in codimension 1, i.e. there exist open subsets $U_i\subseteq X_i$, such that ${\rm codim} (X_i\setminus U_i)\geq 2$ and $\varphi_{|U_1}: U_1\to U_2$ is an isomorphism.

Given a divisor $D\subseteq X_1$ one defines the isomomorphism:
\[
  \varphi_*:\Div(X_1)\to\Div(X_2)
  \qquad
  D\mapsto\overline{\varphi(D\cap U_1)}.
\]
We will denote by the same symbol $\varphi_*$ the induced isomomorphism $\Pic(X_1)\to\Pic(X_2)$.
\end{definition}

An immediate consequence of Definition~\ref{def:sqm} is the following.
\begin{proposition}\label{sqm}
Let $\varphi: X_1\dashrightarrow X_2$ be a small  modification. Then $h^0(\varphi_*(D))\allowbreak =h^0(D)$ for any $D\in\Pic(X_1)$.
\end{proposition}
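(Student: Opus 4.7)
My plan is to use the standard Hartogs-type principle: on a smooth (hence normal) variety, sections of a line bundle are determined by their behavior on any open subset whose complement has codimension at least $2$.

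First, I fix open subsets $U_1\subseteq X_1$ and $U_2\subseteq X_2$ as in Definition~\ref{def:sqm}, so that $\cdim(X_i\setminus U_i)\geq 2$ and $\varphi|_{U_1}\colon U_1\to U_2$ is an isomorphism. Both $X_1$ and $X_2$ are smooth (being iterated blow-ups of $\pp^n$), hence normal. For any line bundle $\mathcal{L}$ on $X_i$, the restriction map
\[
  H^0(X_i,\mathcal{L})\longrightarrow H^0(U_i,\mathcal{L}|_{U_i})
\]
is an isomorphism, since a regular section on $U_i$ extends uniquely across a closed subset of codimension $\geq 2$ in a normal variety.

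Next I would verify that under the identification $\varphi|_{U_1}\colon U_1\xrightarrow{\sim} U_2$ the two restricted line bundles match, that is
\[
  (\varphi|_{U_1})^*\bigl(\Osh_{X_2}(\varphi_*D)|_{U_2}\bigr)\;\cong\;\Osh_{X_1}(D)|_{U_1}.
\]
This is essentially the content of the definition of $\varphi_*$: choosing any Cartier divisor representative $D$ of its class on $X_1$, the divisor $\varphi_*D=\overline{\varphi(D\cap U_1)}$ satisfies $\varphi_*D\cap U_2=\varphi(D\cap U_1)$, so the two divisors are identified by $\varphi|_{U_1}$, and hence so are the associated line bundles on $U_1$ and $U_2$. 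Consequently
\[
  H^0(U_1,\Osh_{X_1}(D)|_{U_1})\;\cong\;H^0(U_2,\Osh_{X_2}(\varphi_*D)|_{U_2}).
\]

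Finally I would chain the three isomorphisms:
\[
  H^0(X_1,D)\;\cong\;H^0(U_1,D|_{U_1})\;\cong\;H^0(U_2,\varphi_*D|_{U_2})\;\cong\;H^0(X_2,\varphi_*D),
\]
which gives $h^0(\varphi_*(D))=h^0(D)$. The only real point requiring care is the compatibility of $\varphi_*$ (defined on divisors/Picard classes) with the sheaf-theoretic identification on the isomorphic open subsets; once this is spelled out, the rest is a direct application of Hartogs' extension. No hard estimates or geometric constructions are needed.
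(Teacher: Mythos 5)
Your argument is correct and is exactly the reasoning the paper has in mind: the paper states this proposition without proof, calling it an immediate consequence of Definition~\ref{def:sqm}. Your combination of Hartogs-type extension of sections across the codimension $\geq 2$ complements $X_i\setminus U_i$ (using normality of the smooth $X_i$) with the identification $\varphi_*D\cap U_2=\varphi(D\cap U_1)$ coming from the definition of $\varphi_*$ is the standard justification, and all three isomorphisms you chain together are valid.
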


Let us go back now to the blow-up $\pi : X\rightarrow \pp^n$ at $r$ points $p_1,\dots,p_r$ in very general position, with $r\geq n+1$. 
We can suppose that the first $n+1$ points are the fundamental ones. 
Consider the small  modification $\varphi: X\dashrightarrow X'$ induced by the birational map:
\[
  \phi:\pp^n\dashrightarrow\pp^n
  \qquad
  (x_0:\dots :x_n)\mapsto (x_0^{-1}:\dots:x_n^{-1}),
\]
where $\pi': X'\rightarrow\pp^n$ is the blow-up of $\pp^n$ at $p_i'=p_i,$ for $i\leq n+1$, and $p_k'=\phi(p_k)$ for $k>n+1$.
Observe that by choosing $\{p_1,\dots,p_r\}\in\mathcal{U}\cap\phi(\mathcal{U})$ 
one has $\{p_1',\dots,p_r'\}\in\mathcal{U}\cap\phi(\mathcal{U})$ (where $\mathcal{U}$ is as in~\eqref{U}) so that the $p_i'$ are still in very general position.
Therefore, even if $X$ and $X'$ are not isomorphic, we can (and from now on we do)
identify $\Pic(X')$ and $\Pic(X)$, so that $\varphi_*$ can be considered as a $\zz$-linear map on $\Pic(X)$.

\begin{proposition}\label{elementary}
With the same notation as above we have $\varphi_*=\sigma$.
\end{proposition}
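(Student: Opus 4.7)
The plan is to verify $\varphi_*=\sigma$ by computing both sides on the basis $H,E_1,\dots,E_r$ of $\Pic(X)$ and matching them. For the reflection, the identities $H\cdot F=n-1$, $E_i\cdot F=1$ for $i\le n+1$, and $E_k\cdot F=0$ for $k>n+1$ give
\[
  \sigma(H)=nH-(n-1)\sum_{i=1}^{n+1}E_i,
  \qquad
  \sigma(E_i)=H-\sum_{\substack{j\le n+1\\ j\ne i}}E_j \quad (i\le n+1),
  \qquad
  \sigma(E_k)=E_k \quad (k>n+1).
\]
So the task reduces to showing that, under the identification of $\Pic(X')$ with $\Pic(X)$ afforded by the common basis, the strict transforms $\varphi_*(H)$ and $\varphi_*(E_i)$ agree with these expressions.

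For $\varphi_*(H)$, I will use the fact that $\phi$ is the Cremona involution defined in homogeneous form by the system of degree-$n$ monomials $\prod_{j\ne k}x_j$, a basis of the linear system $|nH-(n-1)\sum_{i=1}^{n+1}E_i|$ on the blow-up of $\pp^n$ at the $n+1$ fundamental points. The strict transform of a general hyperplane $L\subset\pp^n$, of class $H$ on $X$, is then sent by $\phi$ to a member of this linear system; for the generic choice of $p_{n+2},\dots,p_r$ this hypersurface avoids every $p_k'=\phi(p_k)$, so $\varphi_*(H)=nH-(n-1)\sum_{i\le n+1}E_i$. For $\varphi_*(E_i)$ with $i\le n+1$, a direct computation in an affine chart around $p_i$ shows that $\phi$ sends the tangent directions at $p_i$ birationally onto the opposite coordinate hyperplane $\{x_{i-1}=0\}$; its strict transform contains the remaining fundamental points but, for generic $p_k$, none of the $p_k'$, so $\varphi_*(E_i)=H-\sum_{j\le n+1,\,j\ne i}E_j$. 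Finally, for $k>n+1$ the map $\phi$ is a local isomorphism near $p_k$, so $\varphi$ sends $E_k$ isomorphically to the exceptional divisor over $p_k'$, giving $\varphi_*(E_k)=E_k$.

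The main technical step will be the local analysis at a fundamental point, namely verifying that $E_i$ is sent birationally onto the strict transform of the opposite coordinate hyperplane. The cleanest route is to combine the involutivity $\phi^2=\mathrm{id}$ with the elementary observation that $\phi$ contracts $\{x_{i-1}=0\}$ to $p_i$, so that the reverse description follows by symmetry; alternatively, an explicit parametrisation such as $\phi(1:t a_1:\dots:t a_n)\to (0:\prod_{j\ge 2}a_j:a_1\prod_{j\ge 3}a_j:\dots)$ as $t\to 0$ makes the statement visible. A second point to check is that $\varphi$ is really a small modification in the sense of Definition~\ref{def:sqm}; this holds because the base locus of $\phi$ in $\pp^n$ is a union of coordinate subspaces of codimension at least $2$, and off this locus $\phi$ contracts no divisor.
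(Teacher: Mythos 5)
Your argument is correct and follows essentially the same route as the paper: both verify that $\varphi_*$ and $\sigma$ agree on a basis of $\Pic(X)$ by tracking strict transforms of concrete divisors under the standard Cremona involution. The only difference is the choice of basis --- the paper works with $E_1,\dots,E_r,\,F+E_1$, so that instead of computing $\varphi_*(H)$ via the degree-$n$ defining monomials it only needs the dual fact that the coordinate hyperplane $x_0=0$ is contracted to $p_1$.
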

\begin{proof}
Recall that $F=H-E_1-\dots-E_{n+1}$ and that $\sigma(D)=D+(D\cdot F)F$. Since the point $p_i$, with $i\leq n+1$, is mapped by $\varphi$ to the hyperplane $x_{i+1}=0$, then
$\varphi_*(E_i)= E_i+F=\sigma(E_i)$.
Moreover, from $p_k'=\varphi(p_k)$, we get $\varphi_*(E_k)=E_k=\sigma(E_k)$ for $k>n+1$.
On the other hand, $\varphi$ maps the hyperplane $x_0=0$ to $p_1$, since it is an involution. Thus $\varphi_*(F+E_1)=E_1=\sigma(F+E_1)$.
We conclude observing that $E_1,\dots,E_r$, $F+E_1$ form a basis of $\Pic(X)$.\end{proof}
\begin{proposition}\label{preserve}
Let $D,D'\in\Pic(X)$; then for any $w\in W(X)$: 
\begin{enumerate}
\item $w(D)\cdot w(D') = D\cdot D'$;
\item $h^0(w(D))=h^0(D)$, moreover $D$ is integral if and only if $w(D)$ is.
\end{enumerate}
\end{proposition}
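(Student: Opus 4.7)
My plan is to verify both statements on a generating set of $W(X)$, namely the reflections $\sigma_1,\dots,\sigma_{r-1}$ and $\sigma$, and then extend to arbitrary $w\in W(X)$ using the fact that the elements with the desired property form a subgroup in each case.

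For (1), the starting point is the general identity
\[
  \sigma_R(D)\cdot\sigma_R(D') = D\cdot D' + (D\cdot R)(D'\cdot R)\bigl(2+R^2\bigr),
\]
valid for any $R\in\Pic(X)$. When $R^2=-2$ the bracket vanishes and we recover $D\cdot D'$, so every reflection at a $(-2)$-class is an isometry. All generators of $W(X)$ are of this form and isometries are closed under composition, so $W(X)$ preserves the bilinear form.

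For (2), I would split by generator. The reflection $\sigma$ is identified by Proposition~\ref{elementary} with the push-forward $\varphi_*$ of a small modification, so Proposition~\ref{sqm} immediately gives $h^0(\sigma(D))=h^0(D)$; the same $\varphi_*$ sends a prime divisor $Z$ to $\overline{\varphi(Z\cap U_1)}$, which is still prime because $\varphi$ is an isomorphism in codimension~$1$ and contracts no divisor, so integrality is preserved. For $\sigma_i$, the reflection swaps $E_i$ and $E_{i+1}$ in $\Pic(X)$ and fixes all other generators, so on $D=dH-\sum_j m_jE_j$ it just interchanges $m_i$ and $m_{i+1}$. Since the locus $\mathcal{U}$ from~\eqref{U} is stable under the natural $S_r$-action on configurations, the minimal value of $\dim\ls(d,m,\mathcal{P})$ over $\mathcal{U}$ is symmetric in the entries of $m$, giving $h^0(\sigma_i(D))=h^0(D)$. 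Moreover $\sigma_i$ is realized by the obvious isomorphism between $X$ and the blow-up of $\pp^n$ at the permuted points (same underlying variety, different labelings of exceptional divisors), which sends prime divisors to prime divisors, so integrality is preserved as well. Since the $w\in W(X)$ for which (2) holds form a subgroup containing every generator, the claim follows for all of $W(X)$.

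The main delicate step I anticipate is the $\sigma_i$ case of (2): one must argue that the minimum of $\dim\ls(d,m,\mathcal{P})$ over $\mathcal{U}$, rather than merely a generic value, is symmetric in the $m_j$'s. This is the only place where the precise definition of very general position is actually used, and it follows from the built-in $S_r$-invariance of $\mathcal{U}$ in~\eqref{U}; the remaining reductions to generators are routine.
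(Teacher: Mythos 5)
Your proof is correct and follows essentially the same route as the paper: part (1) via the fact that the generating reflections at $(-2)$-classes are isometries, and part (2) by checking the generators separately, using the symmetry of the very-generality condition for the $\sigma_i$ and Propositions~\ref{sqm} and~\ref{elementary} for $\sigma$. Your explicit identity $\sigma_R(D)\cdot\sigma_R(D')=D\cdot D'+(D\cdot R)(D'\cdot R)(2+R^2)$ just makes precise what the paper asserts without computation.
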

\begin{proof}
The first statement follows from the fact that any $w\in W(X)$ is a composition of isometries of $(\Pic(X),\cdot)$.
For the second statement observe that since the points $p_1,\dots,p_r$ of $\pp^n$ are in very general position and $\sigma_k$ exchanges $p_k$ with $p_{k+1}$, then $h^0(\sigma_k(D))=h^0(D)$ and moreover $D$ is integral if and only if $\sigma_k(D)$ is integral. 
Observe that $h^0(D)=h^0(\varphi_*(D))=h^0(\sigma(D))$, by Propositions~\ref{sqm} and~\ref{elementary}. Moreover, since $\varphi$ is an isomorphism on $U\subseteq X$, with ${\rm codim}(X\setminus U)\geq 2$, then $\overline{\varphi(D\cap U)}$ is integral if and only if $D$ is integral. This completes the proof, by Proposition~\ref{elementary}.
\end{proof}

\section{Classes in standard form}
In this section, given a class $D\in\Pic(X)$ we find a representative $D'$ in the orbit $W(X)\cdot D$ which we will call in pre-standard form (see~\cite{LaUg1} and \cite{Du2}).
We will see in the following sections that these objects play an important rule in the formulation of conjectures for special divisors in the blow up of $\pp^2$ and $\pp^3$.

\begin{definition}
A class $D:=dH-\sum_im_iE_i$ is in {\em pre-standard form} if one of the following equivalent conditions holds:
\begin{enumerate}
\item $D\cdot (H-(n-1)E_1)\geq 0$, $D\cdot F_i\geq 0$ and $D\cdot F\geq 0$, for any $i=1,\dots,r-1$;
\item $d\geq m_1\geq\dots\geq m_r$ and $(n-1)d\geq m_1+\dots+m_{n+1}$.
\end{enumerate}
If in addition $D\cdot E_r\geq 0$, or equivalently $m_r\geq 0$, then $D$ is in {\em standard form}.
\end{definition}

\begin{proposition}\label{pre}
Let $D\in\Pic(X)$ be an effective class. Then there exists a $w\in W(X)$ such that
$w(D)$ is in pre-standard form.
\end{proposition}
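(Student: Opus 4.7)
My plan is an iterative descent on the $H$-coefficient $d$ of $D$, alternating sorting moves from $S(X)$ with the reflection $\sigma$. First, I would apply a permutation $\tau \in S(X)$ to arrange $m_1 \geq m_2 \geq \cdots \geq m_r$; this secures $D \cdot F_i \geq 0$ for every $i$. The additional inequality $d \geq m_1$, equivalent to $D \cdot (H-(n-1)E_1) \geq 0$, is automatic from effectiveness: the strict transform of a generic line through $p_1$ in $\pp^n$ meets $D$ in $d - m_1$, and for a general such line this curve is not in the support of $D$, so $d - m_1 \geq 0$. If in addition $D \cdot F = (n-1)d - \sum_{i=1}^{n+1} m_i \geq 0$, then $D$ is in pre-standard form and I stop; otherwise $D \cdot F < 0$, and applying $\sigma$ produces $\sigma(D) = D + (D \cdot F)F$, whose $H$-coefficient is
\[
  d' = d + (D \cdot F) = nd - \sum_{i=1}^{n+1} m_i < d.
\]
I then re-sort and loop.

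Termination is via the monovariant $d$. Every iterate remains effective by Proposition~\ref{preserve}, hence $d$ stays a non-negative integer throughout (intersecting with the strict transform of a generic line of $\pp^n$ disjoint from the $p_i$ yields $d \geq 0$). Sorting moves leave $d$ unchanged, and each application of $\sigma$ strictly decreases it by $|D \cdot F| \geq 1$. Hence only finitely many $\sigma$-steps can occur, the procedure halts at a class satisfying the sortedness, $d \geq m_1$, and $D \cdot F \geq 0$ — i.e.\ in pre-standard form — and the composition of all the permutations and reflections used gives the desired $w \in W(X)$.

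The main point to handle carefully is the uniform lower bound $d \geq 0$ along the entire orbit. This is not about a single step but about the persistence of the monovariant under iteration, and it reduces to showing that the whole orbit $W(X)\cdot D$ lies in the effective cone, which is precisely the content of Proposition~\ref{preserve}(2). Once this is in place, the descent is purely mechanical.
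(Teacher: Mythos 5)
Your proposal is correct and is essentially the paper's own argument: the authors also sort the multiplicities with $S(X)$, use $h^0(D)>0$ to get $d\geq m_1$, and descend on $d$ by applying $\sigma$ whenever $D\cdot F<0$, with termination guaranteed because effectiveness (hence $d\geq 0$) is preserved along the orbit by Proposition~\ref{preserve}. The only cosmetic difference is that the paper phrases the descent as an induction on $d$ rather than as an iteration with $d$ as monovariant.
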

\begin{proof}
Write $D:=dH-\sum_im_iE_i$ and observe that $d\geq m_1$ since $h^0(D)>0$. We proceed by induction on $d\geq 0$. If $d=0$, then $m_i\leq 0$ for any $i$, 
and applying an element of $S(X)$ we obtain a divisor $D'$ in pre-standard form.
Assume now that $d>0$ and that the statement is true for $d'<d$. After applying an element of $S(X)$ we can assume that $m_1\geq\dots\geq m_r$. If $D\cdot F<0$, then $\sigma(D)=d'H-\sum_im_i'E_i$ with $d'=d+D\cdot F < d$. By induction hypothesis there exists a $w'\in W(X)$ such that $w'(\sigma(D))$ is pre-standard. By taking $w:=w'\circ \sigma$ we get the thesis.
\end{proof}

\subsection{An algorithm for the pre-standard form}
The Maple program  {\tt std} is part of the package {\tt StdClass} that can be freely downloaded (see~\cite{LaUg2}). Given a class $D:=dH-\sum_im_iE_i$, it returns its pre-standard form $D'=d'H-\sum_im_i'E_i$. Here $n=\dim(X)$.
\begin{enumerate}\leftskip 15mm
\item[INPUT =] $n,[d,m_1,\dots,m_r]$.
\item[OUTPUT =] $[d',m_1',\dots,m_r']$.
\end{enumerate}
Here is a Maple session.
\begin{verbatim}
> with(StdClass):
> std(3,[4,3,3,3,3]);
  [0, -1, -1, -1, -1]
\end{verbatim}

\section{(-1)-Classes}
We are now going to introduce some other particular classes in $\Pic(X)$, i.e. the {\em (-1)-classes}, which turn out to be a generalization of $(-1)$-curves of $\pp^2$. Next we analyze the relation between classes in standard form and $(-1)$-classes.  
\begin{definition}
A $(-1)$-class $E$ is an integral class with $h^0(E)>0$ such that $E^2=E\cdot K=-1$, where $K:=\frac{1}{n-1}K_X$.
\end{definition}
Observe that each $E_i$ is a $(-1)$-class and that if $n=2$, then $(-1)$-classes correspond to $(-1)$-curves.
\begin{lemma}\label{-1deg}
Let $E$ be a $(-1)$-class such that $E\cdot F_i\geq 0$ and $E\cdot E_i\geq 0$. Then $E\cdot F<0$.
\end{lemma}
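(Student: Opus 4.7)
The plan is to translate the hypotheses into numerical conditions on $(d,m_1,\dots,m_r)$ and derive a contradiction from the assumption $E\cdot F\geq 0$. Writing $E=dH-\sum_j m_jE_j$, the hypotheses $E\cdot E_i\geq 0$ and $E\cdot F_i\geq 0$ read exactly $m_1\geq m_2\geq\cdots\geq m_r\geq 0$; since $h^0(E)>0$ forces $d\geq m_1$ (as in the proof of Proposition~\ref{pre}) and the case $d=0$ is ruled out because $\sum_j m_j=-1$ is incompatible with $m_j\geq 0$, we have $d\geq 1$. Using $K=-\tfrac{n+1}{n-1}H+\sum_j E_j$, the identities $E^2=-1$ and $E\cdot K=-1$ unpack to
\[
  \sum_j m_j = (n+1)d-1,\qquad \sum_j m_j^2 = (n-1)d^2+1,
\]
so the goal $E\cdot F<0$ rewrites as $S:=\sum_{j=1}^{n+1}m_j>(n-1)d$.

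Suppose for contradiction that $S\leq (n-1)d$. Two rearrangement-type bounds follow from the ordering: $m_j\leq m_1\leq d$ on the top block gives $\sum_{j\leq n+1}m_j^2\leq dS\leq (n-1)d^2$, while $m_j\leq m_{n+1}$ on the bottom block gives $\sum_{j>n+1}m_j^2\leq m_{n+1}\bigl((n+1)d-1-S\bigr)$. Adding these and comparing with $\sum m_j^2=(n-1)d^2+1$ forces
\[
  1\;\leq\;m_{n+1}\bigl((n+1)d-1-S\bigr).
\]
The bracket is $\geq 2d-1\geq 1$, and $m_{n+1}$ is a non-negative integer, so $m_{n+1}\geq 1$; hence $m_1,\dots,m_{n+1}\geq 1$, giving $S\geq n+1$ and already $d\geq(n+1)/(n-1)$.

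To close out, I would sharpen the estimates in the opposite direction: now $m_j\geq m_{n+1}\geq 1$ for $j\leq n+1$ gives the non-trivial lower bound $\sum_{j\leq n+1}m_j^2\geq m_{n+1}S$, and integrality of the $m_j$ gives $\sum_{j>n+1}m_j^2\geq\sum_{j>n+1}m_j=(n+1)d-1-S$. Feeding these back into $\sum m_j^2=(n-1)d^2+1$ produces a quadratic constraint on $S$ that, combined with $S\in[n+1,(n-1)d]$ and $m_{n+1}\leq S/(n+1)$, leaves no valid integer configuration. The main obstacle is precisely this final squeeze: purely continuous Cauchy--Schwarz cleanly rules out only $d=1$, so for $d\geq 2$ the contradiction genuinely uses the integrality of the $m_j$ together with the sharpness of $\sum m_j^2=(n-1)d^2+1$, and I expect a short case analysis on the value of $m_{n+1}$ (which is $\leq (n-1)d/(n+1)$) to finish it off.
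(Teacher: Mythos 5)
Your setup is correct: the hypotheses do translate into $d\geq m_1\geq\dots\geq m_r\geq 0$, $d\geq 1$, $\sum_j m_j=(n+1)d-1$, $\sum_j m_j^2=(n-1)d^2+1$, and the goal $S:=\sum_{j\leq n+1}m_j>(n-1)d$; the deduction that $S\leq(n-1)d$ forces $1\leq m_{n+1}\bigl((n+1)d-1-S\bigr)$ and hence $m_{n+1}\geq 1$ is also fine. But the proof stops there: the last paragraph is a plan rather than an argument, and the ``final squeeze'' you describe does not close. Writing $T:=(n+1)d-1-S$, your proposed lower bounds $\sum_{j\leq n+1}m_j^2\geq m_{n+1}S$ and $\sum_{j>n+1}m_j^2\geq T$, combined with your upper bounds $\sum_{j\leq n+1}m_j^2\leq dS$ and $\sum_{j>n+1}m_j^2\leq m_{n+1}T$, yield only $(d-m_{n+1})S\geq(1-m_{n+1})T$, which holds automatically; and pushing the upper bounds alone as far as they go gives only $1\leq m_{n+1}(2d-1)$, again no contradiction. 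The slack you discard in passing from $\sum_{j\leq n+1}m_j^2$ to $dS$ and from $\sum_{j>n+1}m_j^2$ to $m_{n+1}T$ is exactly the pair of non-negative quantities $\sum_{i\leq n+1}(d-m_i)(m_i-m_{n+1})$ and $\sum_{i>n+1}m_i(m_{n+1}-m_i)$, and once these are thrown away the surviving constraints admit spurious solutions, so no case analysis on $m_{n+1}$ can finish from where you stand. Your expectation that integrality of the $m_j$ must enter is also misplaced: the statement is a purely real inequality given the ordering, $m_{n+1}\geq 0$, $d\geq m_1$ and $d>0$.

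The missing idea --- and the paper's actual proof --- is to use an exact identity instead of one-sided estimates. Since $m_{n+1}\geq 0$, one has $E^2+m_{n+1}(E\cdot K)=-1-m_{n+1}<0$; expanding this and substituting $(n-1)d=\sum_{i=1}^{n+1}m_i+E\cdot F$ gives
\[
  d\,(E\cdot F)\;=\;-1-m_{n+1}\;-\;\sum_{i=1}^{n+1}(d-m_i)(m_i-m_{n+1})\;+\;\sum_{i=n+2}^{r}m_i(m_i-m_{n+1}).
\]
On the right-hand side the first term is $\leq -1$, the second is $\leq 0$ because $d\geq m_i\geq m_{n+1}$ for $i\leq n+1$, and the third is $\leq 0$ because $0\leq m_i\leq m_{n+1}$ for $i>n+1$. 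Hence $d\,(E\cdot F)<0$ and, as $d\geq 1$, $E\cdot F<0$. In other words, the correct move is to subtract $m_{n+1}\cdot(\text{each }m_i)$ \emph{before} estimating, which keeps the two loss terms visible and manifestly of the right sign; this is the step your Cauchy--Schwarz-style bounds cannot recover.
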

\begin{proof}
Let $E:=dH-\sum_im_iE_i$. By hypothesis the multiplicities are in a decreasing order and $m_{n+1}\geq 0$. Therefore $E^2+m_{n+1}E\cdot K<0$, or equivalently
\[
  d((n-1)d-m_{n+1}(n+1))-\sum_{i=1}^{n+1}m_i(m_i-m_{n+1})<
  \sum_{i=n+2}^rm_i(m_i-m_{n+1}).
\]
Since the right hand side of the inequality is non-positive, the left hand side is negative. Writing $(n-1)d=\sum_{i=1}^{n+1}m_i+E\cdot F$ and substituting one obtains
\[
  \sum_{i=1}^{n+1}(d-m_i)(m_i-m_{n+1})+d(E\cdot F)<0
\]
which implies the thesis. 
\end{proof}

\begin{proposition}\label{-1}
A class $E$ is a $(-1)$-class if and only if there exists a $w\in W(X)$ such that
$E=w(E_1)$. 
\end{proposition}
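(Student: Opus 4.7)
\textbf{Plan for the proof of Proposition \ref{-1}.}

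The $(\Leftarrow)$ direction is immediate from the machinery already assembled. For any $w \in W(X)$, Proposition~\ref{preserve}(1) gives $w(E_1)^2 = E_1^2$ and, combined with the $W(X)$-invariance of $K$ noted in the earlier remark, also $w(E_1)\cdot K = E_1\cdot K$. A direct computation using the standard blow-up formula $K = -\tfrac{n+1}{n-1}H + \sum_j E_j$ shows $E_1^2 = E_1\cdot K = -1$, while Proposition~\ref{preserve}(2) transports integrality and the inequality $h^0 > 0$ from $E_1$ to $w(E_1)$.

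For the $(\Rightarrow)$ direction I would argue by strong induction on the degree $d$ of $E = dH - \sum m_i E_i$. In the base case $d = 0$ the two conditions $E^2 = -1$ and $E\cdot K = -1$ reduce to $\sum m_i^2 = 1$ and $\sum m_i = -1$, forcing exactly one $m_j = -1$ with the remaining multiplicities zero; hence $E = E_j$, and a permutation in $S(X) \subset W(X)$ carries it to $E_1$.

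The inductive step $d > 0$ is the substantive part, and the plan is to sort the multiplicities using $S(X)$ and then apply $\sigma$ to lower $d$. Before invoking Lemma~\ref{-1deg} one must verify that all $m_i \geq 0$: this is where integrality matters in a non-formal way. Since $E$ is the class of a prime divisor $D$ of positive degree, $D$ cannot coincide with any $E_j$, so $D$ does not contain $E_j$ as a component, and $m_j = D\cdot e_j \geq 0$ is the degree of the effective restriction of $D$ to $E_j \cong \pp^{n-1}$. After applying a permutation in $S(X)$ to arrange $m_1 \geq \cdots \geq m_r \geq 0$, Lemma~\ref{-1deg} applies and yields $E\cdot F < 0$, so $\sigma(E)$ has degree $d + E\cdot F$ strictly less than $d$. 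Since being a $(-1)$-class is a $W(X)$-invariant condition---integrality, $h^0 > 0$, and the numerical identities $E^2 = E\cdot K = -1$ are all preserved by $W(X)$---the class $\sigma(E)$ is again a $(-1)$-class. The induction hypothesis then provides $w' \in W(X)$ with $\sigma(E) = w'(E_1)$, and using $\sigma^2 = \mathrm{id}$ one concludes $E = \sigma(w'(E_1)) \in W(X)\cdot E_1$.

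The step I expect to require the most careful articulation is the non-negativity of the multiplicities in the inductive case. This is the unique place where the geometric definition of $(-1)$-class via a prime divisor (rather than a purely numerical condition) is indispensable, since a merely numerical $(-1)$-class can have negative multiplicities---as $E_1$ itself illustrates in the base case---and Lemma~\ref{-1deg} would then fail to apply. Everything else is formal bookkeeping with $W(X)$-invariance.
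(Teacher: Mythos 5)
Your proof is correct and follows essentially the same route as the paper: sort the multiplicities with $S(X)$, apply Lemma~\ref{-1deg} to obtain $E\cdot F<0$, and descend via $\sigma$. The only (harmless) differences are that you induct on the degree $d$ rather than on $m_{n+1}$, and you dispose of negative multiplicities through the purely numerical base case $d=0$ together with the integrality argument for $d>0$, instead of the base-locus argument of Remark~\ref{mi<0} used in the paper.
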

\begin{proof}
If $w\in W(X)$, then $w(E_1)^2=w(E_1)\cdot K=-1$, since $w$ is an isometry and $w(K)=K$. Moreover $w(E_1)$ is integral, by Proposition~\ref{preserve}, so that $w(E_1)$ is a $(-1)$-class.

Assume now that $E:=dH-\sum_im_iE_i$ is a $(-1)$-class.
Modulo an element of $S(X)$ we can assume that the multiplicities are in a decreasing order, or equivalently $E\cdot F_i\geq 0$. Moreover, if $m_i<0$, then $E_i\subseteq\Bs|E|$, by Remark \ref{mi<0}, so that $E=E_i$ since $E$ is integral. Thus $w(E)=E_1$, where $w\in W(X)$ is the transposition $(1,i)$. 
We now proceed by induction on $m_{n+1}\geq -1$. We have already proved the first step of the induction.
Assume the property is true for any $m_{n+1}<m$ and let us prove it for $m_{n+1}=m\geq 0$.
By Lemma~\ref{-1deg} we have $E\cdot F\leq 0$.
Thus $\sigma(E)=d'H-\sum_im_i'E_i$, with $m_{n+1}'=m_{n+1}+E\cdot F<m$ and we conclude by induction.
\end{proof}



We are now ready for the main theorem of this section.

\begin{theorem}\label{standard}
Let $D$ be in standard form. Then $w(D)\cdot E\geq 0$ for any $(-1)$-class $E$ and any $w\in W(X)$.
\end{theorem}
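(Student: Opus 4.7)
My plan is to reduce to the case $w = \mathrm{id}$ and then induct on $d_E$, where $E = d_E H - \sum e_i E_i$. Since $W(X)$ acts by isometries on $\Pic(X)$ (Proposition~\ref{preserve}), we have $w(D)\cdot E = D\cdot w^{-1}(E)$; and by Proposition~\ref{-1} the set of $(-1)$-classes is the $W(X)$-orbit of $E_1$, which is $W(X)$-invariant, so $w^{-1}(E)$ is again a $(-1)$-class. It therefore suffices to prove $D\cdot E\geq 0$ for every $D$ in standard form and every $(-1)$-class $E$.

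For the base case $d_E=0$, the relations $E^2=-1$ and $E\cdot K=-1$ force $\sum e_i^2=1$ and $\sum e_i=-1$, so $E=E_j$ for some $j$, and $D\cdot E = m_j\geq 0$ by standard form. For the inductive step $d_E\geq 1$, if some $e_i<0$, then the integrality argument from the proof of Proposition~\ref{-1} forces $E=E_j$, contradicting $d_E\geq 1$; so all $e_i\geq 0$. Let $E^+$ be the class obtained by permuting the multiplicities of $E$ into decreasing order; the sorting permutation lies in $S(X)\subset W(X)$, so $E^+$ is again a $(-1)$-class. Since the multiplicities of $D$ are already sorted in decreasing order, the rearrangement inequality gives $\sum m_i e_i \leq \sum m_i e_i^+$, and hence $D\cdot E\geq D\cdot E^+$. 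The class $E^+$ satisfies the hypotheses of Lemma~\ref{-1deg}, so $E^+\cdot F<0$; consequently $\sigma(E^+)=E^+ + (E^+\cdot F)F$ is a $(-1)$-class whose $H$-coefficient equals $d_E + E^+\cdot F<d_E$. By the inductive hypothesis $D\cdot\sigma(E^+)\geq 0$, and
\[
  D\cdot E^+ = D\cdot\sigma(E^+) - (E^+\cdot F)(D\cdot F) \geq 0,
\]
since $E^+\cdot F<0$ and $D\cdot F\geq 0$. Combined with $D\cdot E\geq D\cdot E^+$, this completes the induction.

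The decisive step is the rearrangement inequality: it replaces $E$ by the class $E^+$ whose multiplicities are sorted in the same decreasing order as those of $D$, which is precisely what is required for Lemma~\ref{-1deg} to guarantee $E^+\cdot F<0$ and thereby reduce $d_E$ via $\sigma$. Without first sorting, $E\cdot F$ need not be negative, and there is no obvious controlled way to lower $d_E$ while preserving the sign of $D\cdot E$.
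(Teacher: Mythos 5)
Your proof is correct and follows essentially the same route as the paper's: reduce to $w=\mathrm{id}$ using the isometry property and the $W(X)$-invariance of the set of $(-1)$-classes, then induct on the degree of $E$, sorting the multiplicities (your rearrangement-inequality step is exactly the paper's "$D\cdot E'\leq D\cdot E$") and applying Lemma~\ref{-1deg} together with $\sigma$ to drop the degree. If anything, you are slightly more careful than the paper in justifying $E\cdot E_i\geq 0$ via the integrality argument before invoking Lemma~\ref{-1deg}.
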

\begin{proof}
Let $E=dH-\sum_im_iE_i$. We first prove by induction on $d\geq 0$ that $D\cdot E\geq 0$.
If $d=0$, then $E=E_i$ for some $i$. Since $D$ is in standard form, then $D\cdot E_i\geq 0$. 
If $d>0$, let $E'$ be the divisor obtained from $E$ by reordering the multiplicities in decreasing order, then $D\cdot E'\leq D\cdot E$ and $E'$ is a $(-1)$-divisor which satisfies $E'\cdot F_i\geq 0$. Thus we can assume that both $E\cdot F_i\geq 0$ and $E\cdot E_i\geq 0$ are satisfied. By Lemma~\ref{-1deg} we have $E\cdot F<0$. Hence
\[
  D\cdot (\sigma(E)-E)=D\cdot (E\cdot F)F\leq 0
\]
since $D\cdot F\geq 0$. Let $\sigma(E)=d'H-\sum_im_i'E_i$, with $d'=d+E\cdot F<d$. By induction hypothesis we have $D\cdot \sigma(E)\geq 0$ which implies $D\cdot E\geq 0$.\\
In order to conclude the proof, let $w'\in W(X)$ be the inverse of $w$. Then $w(D)\cdot E = D\cdot w'(E)\geq 0$ since $w'(E)$ is a $(-1)$-class by Proposition~\ref{-1}.
\end{proof}

The following corollary shows that some geometric properties of $(-1)$-curves on a rational surface generalize to $(-1)$-classes. 

\begin{corollary}
Let $D\in\Pic(X)$ be an effective class and let $E$, $E'$ be $(-1)$-classes. 
\begin{enumerate}
\item If $D\cdot E<0$, then $E\subseteq\Bs|D|$. 
\item If $E$, $E'$ both have negative product with $D$, then $E\cdot E'=0$.
\end{enumerate}
\end{corollary}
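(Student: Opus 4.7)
The plan for part (1) is to reduce to the case $E = E_1$ using Proposition~\ref{-1} and then invoke Remark~\ref{mi<0}. Concretely, I would write $E = w(E_1)$ for some $w \in W(X)$ and put $D' := w^{-1}(D)$. By Proposition~\ref{preserve}, $D'$ is effective and $D' \cdot E_1 = D \cdot E < 0$, so the multiplicity of $D'$ at $p_1$ is negative; Remark~\ref{mi<0} then yields $E_1 \subseteq \Bs|D'|$, equivalently $D' - E_1$ is effective and $h^0(D' - E_1) = h^0(D')$. Applying $w$ and Proposition~\ref{preserve} once more transports this to $h^0(D - E) = h^0(D)$. Since $E$ is integral and $h^0(E) = h^0(E_1) = 1$, the system $|E|$ has a unique irreducible member $E_0$, and the equality of the $h^0$'s forces every section of $D$ to be divisible by the section cutting out $E_0$, giving $E_0 \subseteq \Bs|D|$.

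For part (2), first note that the case $E = E'$ is implicitly excluded since it would give $E \cdot E' = -1$. Showing $E \cdot E' \geq 0$ is immediate by the same reduction: after reducing to $E = E_1$, a negative value of $E_1 \cdot E'$ would force $E_1 \subseteq \Bs|E'|$ via Remark~\ref{mi<0}, but integrality of $E'$ then gives $E' = E_1 = E$, contradicting $E \neq E'$. The main content is to rule out $c := E \cdot E' \geq 1$. Assuming this for contradiction, I would show by induction on $a \geq 0$ that $D_a := D - a(E + E')$ is effective with both $D_a \cdot E < 0$ and $D_a \cdot E' < 0$: the inductive step applies (1) successively to $(D_a, E)$ and then to $(D_a - E, E')$, where the key computation
\[
(D_a - E) \cdot E' = D \cdot E' + a(1 - c) - c < 0
\]
uses $c \geq 1$ to make the $a(1-c)$ term non-positive. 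The negativity conditions on $D_{a+1}$ follow from $D_{a+1} \cdot E = D \cdot E + (a+1)(1 - c) \leq D \cdot E < 0$ and its analogue for $E'$, so the induction closes.

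The contradiction then comes from the $H$-coefficient: any effective class has non-negative coefficient of $H$ (its sections push down to degree-$d$ homogeneous polynomials on $\pp^n$), but the $H$-coefficient of $D_a$ is $d - a(d_E + d_{E'})$, which becomes negative for large $a$ as long as $d_E + d_{E'} > 0$. The remaining possibility $d_E = d_{E'} = 0$ forces $E = E_i$ and $E' = E_j$ for some $i \neq j$, whence $E \cdot E' = 0$, contradicting $c \geq 1$. The main obstacle is the inductive argument in (2): one must check that the iteration really continues indefinitely, which is precisely where the assumption $c \geq 1$ is used to keep the intersection numbers strictly negative and (1) applicable at every stage.
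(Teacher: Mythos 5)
Your argument is correct, but it takes a genuinely different route from the paper's, most visibly in part (2). The paper normalizes $D$ rather than $E$: it picks $w\in W(X)$ putting $D$ in pre-standard form (Proposition~\ref{pre}), writes $w(D)=M+\sum_i a_iE_i$ with $M$ standard, and uses Theorem~\ref{standard} to force $M\cdot w(E)\geq 0$, hence $w(E)\cdot E_i<0$ for some $i$ with $a_i>0$; integrality and Remark~\ref{mi<0} then give $w(E)=E_i\subseteq\Bs|w(D)|$. You instead normalize $E$ to $E_1$ via Proposition~\ref{-1}, which lets you quote Remark~\ref{mi<0} directly and dispense with Theorem~\ref{standard} altogether --- a genuine streamlining of part (1), and your transport of the base-locus statement through $h^0(D-E)=h^0(D)$ plus divisibility of sections is a clean way to avoid worrying about the indeterminacy loci of the small modifications. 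The cost appears in part (2): the paper's single $w$, being adapted to $D$, simultaneously identifies $E$ with some $E_j$ and $E'$ with some $E_k$, $j\neq k$, so $E\cdot E'=E_j\cdot E_k=0$ in one line. Since your normalization is adapted to $E$ alone, you must recover (2) by infinite descent: the inequality $(D_a-E)\cdot E'=D\cdot E'+a(1-c)-c<0$ does keep the induction running when $c\geq 1$, the unbounded drop of the $H$-coefficient gives the contradiction, and the residual case $d_E=d_{E'}=0$ is correctly disposed of, with the separate observation that distinct $(-1)$-classes satisfy $E\cdot E'\geq 0$ supplying the other inequality. Both proofs are valid; the paper's is shorter once Theorem~\ref{standard} is in hand, while yours buys independence from that theorem in part (1) at the price of a longer part (2).
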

\begin{proof}
Assume that $D\cdot E<0$ and let $w\in W(X)$ be such that $w(D)$ is in pre-standard form, by Proposition~\ref{pre}.
Write $w(D)=M+\sum_ia_iE_i$, where $M$ is in standard form and $a_i>0$ for any $i$. 
Observe that $w(D)\cdot w(E)=D\cdot E<0$ by Proposition \ref{preserve}, and $M\cdot w(E)\geq 0$ by Theorem~\ref{standard}. This implies that $w(E)\cdot\sum_ia_iE_i<0$, and in particular $w(E)\cdot E_i<0$ for some $i$. Since $w(E)$ is integral and $E_i\subseteq\Bs|w(E)|$, by Remark~\ref{mi<0}, then $w(E)=E_i$. Still, by Remark~\ref{mi<0}, we get $w(E)\subseteq\Bs|w(D)|$, which proves the first statement.

For the second statement observe that reasoning as above, we get $w(E)=E_j$, $w(E')=E_k$. Thus we conclude observing that $E\cdot E'=E_j\cdot E_k=0$.

\end{proof}

\section{Classes on the blow-up of $\pp^2$}

The aim of this section is to prove the equivalence between two conjectures about  special classes $D\in\Pic(X)$ on the blow-up $X$ of $\pp^2$ at points in very general position. We will provide a Maple program for calculating $h^0(D)$, based on the second conjecture.
We recall that a class $D:=dH-\sum_im_iE_i$ of $X$ is in standard form if
\[
  d\geq m_1\geq\dots\geq m_r\geq 0
  \qquad
  d\geq m_1+m_2+m_3.
\]

\begin{theorem}\label{equiv}
Let $X$ be the blow-up of $\pp^2$ at a finite number of points in very general position. The following two statements are equivalent:
\begin{enumerate}
\item an effective class $D$ is special if and only if there exists a $(-1)$-curve $E$ such that $D\cdot E\leq -2$;
\item an effective class in standard form is non-special.
\end{enumerate}
\end{theorem}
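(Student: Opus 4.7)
The plan is to exploit the reduction machinery of Sections~3--4 to relate specialty of an arbitrary effective class to that of its standard-form representative.

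Direction (1) $\Rightarrow$ (2) is immediate. For $D$ effective and in standard form, Theorem~\ref{standard} applied with $w$ equal to the identity of $W(X)$ yields $D \cdot E \geq 0$ for every $(-1)$-class $E$. Since in dimension two $(-1)$-classes coincide with $(-1)$-curves, no $(-1)$-curve satisfies $D \cdot E \leq -2$, and hypothesis (1) forces $D$ to be non-special.

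For the easy implication $(\Leftarrow)$ of (2) $\Rightarrow$ (1), suppose a $(-1)$-curve $E$ satisfies $D \cdot E = -n \leq -2$. For $j = 1, \dots, n$ I iterate the restriction sequence
\[
  0 \to \Osh_X(D - jE) \to \Osh_X(D - (j-1)E) \to \Osh_E((D - (j-1)E) \cdot E) \to 0,
\]
and since $(D - (j-1)E) \cdot E \leq -1$, each quotient has no global sections. This yields $h^0(D - nE) = h^0(D)$ and, summing Euler-characteristic shifts, $\chi(D - nE) = \chi(D) + \binom{n}{2}$. Because $D - nE$ is effective and $h^2$ vanishes on effective classes by Serre duality, $h^0(D) \geq \chi(D) + \binom{n}{2} > \chi(D)$, so $D$ is special.

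For the harder implication $(\Rightarrow)$ I argue by contradiction: suppose $D$ is effective and special but no $(-1)$-curve $E$ satisfies $D \cdot E \leq -2$. Apply Proposition~\ref{pre} to obtain $w \in W(X)$ with $w(D) = dH - \sum m_i E_i$ pre-standard; by Proposition~\ref{preserve} and the $W(X)$-invariance of $K_X$, both $h^0$ and $\chi$, hence $h^1$, are preserved, so $w(D)$ is still special. Using $w(D) \cdot E_i = D \cdot w^{-1}(E_i)$ together with Proposition~\ref{-1} (which identifies $w^{-1}(E_i)$ as a $(-1)$-class, hence a $(-1)$-curve in dimension two), the standing assumption forces $m_i \geq -1$ for every $i$. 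Each $E_i$ with $m_i = -1$ is then a fixed component of $|w(D)|$ by Remark~\ref{mi<0}; peeling it off preserves $h^0$ and shifts $\chi$ by $\binom{1}{2} = 0$, so $h^1$ is preserved. Reapplying Proposition~\ref{pre} to the residual class and iterating, one reaches a class in genuine standard form with the same $h^1 > 0$ as $D$, contradicting hypothesis (2). The main obstacle is the termination and bookkeeping of this iteration: a single multiplicity-one peeling can destroy the pre-standard inequality $d \geq m_1 + m_2 + m_3$, forcing further Weyl-group reductions, and one must verify that no intermediate step introduces a nonzero Euler-characteristic correction severing the identity $h^1(D) = h^1(D^*)$ at the terminal standard-form class $D^*$; termination itself should follow from a norm argument, such as strict decrease of $d + \sum_i |m_i|$ at each nontrivial step.
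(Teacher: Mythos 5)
Your direction (1) $\Rightarrow$ (2) matches the paper's (Theorem~\ref{standard} with $w=\mathrm{id}$), and your ``easy'' half of (2) $\Rightarrow$ (1) is a correct, careful version of a step the paper silently takes for granted. The problem is the main half of (2) $\Rightarrow$ (1), where you yourself flag the obstacle without removing it: your scheme (first pass to pre-standard form, then peel off only the exceptional divisors $E_i$ with $m_i=-1$, then re-reduce, iterate) leaves both the termination of the loop and the preservation of the standing hypothesis ``no $(-1)$-curve meets the class in $\leq -2$'' unverified. The second point is not cosmetic: subtracting $E_i$ changes the intersection with every other $(-1)$-curve $E'$ by $-E_i\cdot E'$, which can be arbitrarily large, so a priori a class with $(-1)$-curve intersection $\leq -2$ could appear mid-iteration, at which point your contradiction argument stalls. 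Ruling this out requires an extra lemma (essentially that $(-1)$-curves which are fixed components of an effective class are pairwise orthogonal and cannot acquire multiplicity $\geq 2$ under these hypotheses -- cf.\ part (2) of the Corollary in Section~4), and your proposal does not supply it. As written, the proof is incomplete.

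The paper avoids the interleaving entirely by reversing the order of operations: it first subtracts $(-1)$-curves $E$ with $D\cdot E=-1$ (all of them, in the original coordinates, not just the $E_i$ after a Weyl move), noting that each subtraction preserves both $h^0$ and $\chi$, and thereby reduces to the case $D\cdot E\geq 0$ for \emph{every} $(-1)$-curve. Only then does it apply Proposition~\ref{pre}: the resulting pre-standard representative $w(D)$ has $m_i'=w(D)\cdot E_i=D\cdot w^{-1}(E_i)\geq 0$ because $w^{-1}(E_i)$ is a $(-1)$-curve by Proposition~\ref{-1}, so $w(D)$ is already standard and hypothesis (2) applies in one shot -- no loop, no re-reduction. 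If you adopt this ordering, your argument closes; the residual issue (termination of the initial peeling and stability of the hypothesis under it, which the paper also treats tersely) is then a single self-contained lemma rather than a bookkeeping problem entangled with the Weyl-group action. Note for termination that $(D-E)^2=D^2+1$ while $D^2\leq (D\cdot H)^2$ with $D\cdot H$ non-increasing, so the peeling stops after finitely many steps.
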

\begin{proof}
Let us first prove that (1) $\Rightarrow$ (2). If $D$ is an effective class in standard form, then $D\cdot E\geq 0$ for any $(-1)$-curve $E$, by Theorem~\ref{standard}. Hence $D$ is non-special.

We now prove (2) $\Rightarrow$ (1). Let $D$ be an effective divisor such that $D\cdot E\geq -1$ for any $(-1)$-curve $E$. Observe that if $D\cdot E=-1$, then $h^0(D)=h^0(D-E)$ and $\chi(D)=\chi(D-E)$, by the Riemann-Roch theorem. Thus we can assume that $D\cdot E\geq 0$ for any $(-1)$-curve $E$.
By Proposition~\ref{pre} there is a $w\in W(X)$ such that $D':=w(D)$ is in pre-standard form. Write $D'=d'H-\sum_im_i'E_i$ and observe that $m_i'\geq 0$ by our last assumption. Thus $D'$ is standard and hence non-special.
\end{proof}
We recall that statement (1) is known in literature as the {\em Segre, \allowbreak  Harbourne, \allowbreak Gimigliano, \allowbreak Hirschowiz conjecture}, or simply {\em S.H.G.H. conjecture} (see~\cite{Gi}, \cite{Ha}, \cite{Hi} and \cite{Se}), and it has been checked in a number of cases. The equivalence between the Segre conjecture and part (1) of Theorem~\ref{equiv} has been proved in~\cite{CiMi}.

\subsection{An algorithm for calculating $h^0(D)$}
The Maple program {\tt dim2} (see~\cite{LaUg2}), given $D:=dH-\sum_im_iE_i$, returns $h^0(D)$, assuming one of the two statements of Theorem~\ref{equiv} to be true. 
\begin{enumerate}\leftskip 15mm
\item[INPUT =] $[d,m_1,\dots,m_r]$.
\item[OUTPUT =] $h^0(D)$.
\end{enumerate}
Here is a Maple session.
\begin{verbatim}
> with(StdClass):
> dim2([96,34,34,34,34,34,34,34,34]);
  1
\end{verbatim}

\section{Classes on the blow-up of $\pp^3$}
The aim of this section is to state a conjecture about special classes $D\in\Pic(X)$ on the blow-up $X$ of $\pp^3$ at points in very general position. We will provide a Maple program for calculating $h^0(D)$, based on this conjecture. We recall that a class $D:=dH-\sum_im_iE_i$ of $X$ is in standard form if
\[
  d\geq m_1\geq\dots\geq m_r\geq 0
  \qquad
  2d\geq m_1+m_2+m_3+m_4.
\]

Let $Q$ be the strict transform of the quadric through the first $9$ points, or equivalently its class is $2H-E_1-\dots-E_9$. In what follows we assume that $D$ is in standard form. We wish to provide a criterion for deciding when $Q\subseteq\Bs(|D|)$. 
\begin{proposition}
The divisor $D_{|Q}$ has non-negative intersection with any $(-1)$-curve of $Q$. 
\end{proposition}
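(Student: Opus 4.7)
I would start by putting down coordinates on $Q$. Since the nine points $p_1,\ldots,p_9$ are in very general position, the quadric through them is smooth, so $Q\cong\mathbb{P}^1\times\mathbb{P}^1$ blown up at $p_1,\ldots,p_9$; let $h_1,h_2,\epsilon_1,\ldots,\epsilon_9$ be the usual basis of $\operatorname{Pic}(Q)$ with $h_1h_2=1$, $h_i^2=0$, $\epsilon_i\cdot\epsilon_j=-\delta_{ij}$, $h_i\epsilon_j=0$. One checks that $H|_Q=h_1+h_2$ and $E_i|_Q=\epsilon_i$ for $i\leq 9$ (while $E_i|_Q=0$ for $i>9$, since those points lie off $Q$), so that $D|_Q=d(h_1+h_2)-\sum_{i=1}^9 m_i\epsilon_i$. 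For a $(-1)$-curve $C=ah_1+bh_2-\sum c_i\epsilon_i$ on $Q$, the adjunction relations $C^2=-1$ and $K_Q\cdot C=-1$ translate into $2ab-\sum c_i^2=-1$ and $\sum c_i=2(a+b)-1$, and the goal is to show
\[
D|_Q\cdot C\;=\;d(a+b)\;-\;\sum_{i=1}^9 m_ic_i\;\geq\;0.
\]

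I would then dispose of the easy configurations. If some $c_i<0$, then $\epsilon_i\cdot C<0$ forces $\epsilon_i$ to be a component of the irreducible curve $C$ and hence $C=\epsilon_i$, so $D|_Q\cdot C=m_i\geq 0$ from standard form. For the rulings $C=h_j-\epsilon_i$ one has $D|_Q\cdot C=d-m_i\geq 0$. In every remaining case $a,b\geq 1$ and $c_i\geq 0$, and intersecting $C$ with the two rulings through $p_i$ yields the a priori bound $c_i\leq\min(a,b)$.

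The cleanest instance of the remaining case is $a=b$. The class $\tilde C:=aH-\sum_{i=1}^9 c_iE_i\in\operatorname{Pic}(X)$ satisfies, for the paper's quadratic form, $\tilde C^2=2a^2-(2a^2+1)=-1$ and $\tilde C\cdot K=-4a+(4a-1)=-1$. Since all multiplicities $c_i$ are non-negative, the inductive reduction used in the proof of Proposition~\ref{-1} (each step of which decreases either $d$ or $m_4$ via the reflection $\sigma$, eventually terminating at some $E_j$) identifies $\tilde C$ with an element of $W(X)\cdot E_1$, so $\tilde C$ is a genuine $(-1)$-class on $X$. A direct computation then gives $D\cdot\tilde C=2da-\sum m_ic_i=d(a+b)-\sum m_ic_i=D|_Q\cdot C$, and the non-negativity is immediate from Theorem~\ref{standard}.

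The main obstacle is the case $a\neq b$: the analogous candidate $\tfrac{a+b}{2}H-\sum c_iE_i$ has self-intersection $\tfrac12((a-b)^2-2)\geq 1$ and is therefore not a $(-1)$-class, so the Weyl-group argument does not apply. Here I would argue numerically: after using the rearrangement inequality to assume $c_1\geq\cdots\geq c_9\geq 0$, combine the standard-form relations $m_1+m_2+m_3+m_4\leq 2d$ (which in particular forces $m_4\leq d/2$ and hence $m_i\leq d/2$ for every $i\geq 4$) with the constraints $c_i\leq\min(a,b)$ and $\sum c_i=2(a+b)-1$ to bound $\sum m_ic_i\leq d(a+b)$. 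The difficulty will be organizing these inequalities uniformly across all admissible bidegrees $(a,b)$ and profiles $(c_i)$; I expect that an Abel-summation or partial-sum argument using both the linear constraint $\sum c_i$ and the quadratic constraint $\sum c_i^2=2ab+1$ will handle all cases at once.
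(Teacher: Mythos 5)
Your instinct to reduce everything to Theorem~\ref{standard} is the right one, but the proposal has a genuine gap: the case $a\neq b$ --- which is most of the $(-1)$-curves of $Q$ --- is not proved. You only describe a hoped-for Abel-summation argument and explicitly acknowledge that you have not organized the inequalities; as written this is a conjecture, not a proof. The case $a=b$ is also incomplete: you assert that $\tilde C=aH-\sum_{i=1}^9c_iE_i$ is a genuine $(-1)$-class on $X$ because ``the inductive reduction of Proposition~\ref{-1} terminates at some $E_j$,'' but that reduction uses effectivity and integrality of the class in an essential way at its terminal step (a class whose reduced form has a negative multiplicity is forced to equal some $E_j$ only via Remark~\ref{mi<0}, i.e.\ via base-locus and irreducibility arguments). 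The purely numerical conditions $\tilde C^2=\tilde C\cdot K=-1$ with $c_i\geq 0$ do not by themselves make $\tilde C$ effective or integral on the threefold, and effectivity of $C$ on the surface $Q$ does not lift to effectivity of $\tilde C$ on $X$ (the restriction map goes the wrong way). Theorem~\ref{standard}, whose proof also invokes integrality, therefore cannot be applied to $\tilde C$ without further justification.

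The paper sidesteps both problems by staying on the surface: since $Q$ is $\pp^1\times\pp^1$ blown up at $9$ points, it is also $\pp^2$ blown up at $10$ points, with basis of $\Pic(Q)$ given by $f_1+f_2-e_1,\ f_1-e_1,\ f_2-e_1,\ e_2,\dots,e_9$. In this basis $D_{|Q}$ has degree $2d-m_1$ and multiplicities $d-m_1,\ d-m_1,\ m_2,\dots,m_9$, and the standard-form inequalities for $D$ (namely $d\geq m_1\geq\cdots\geq m_r\geq 0$ and $2d\geq m_1+m_2+m_3+m_4$) show that, after reordering the multiplicities and possibly applying one reflection $\sigma$ of $Q$, this class is in standard form \emph{on the surface}. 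Theorem~\ref{standard}, applied on $Q$, then gives non-negative intersection with every $(-1)$-curve of $Q$ simultaneously, with no case division on the bidegree $(a,b)$. To salvage your approach you would need either to complete the numerical argument uniformly in $(a,b)$, or --- more in the spirit of the paper --- to transport $D_{|Q}$ into standard form on $Q$ itself.
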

\begin{proof}
Let $f_1$, $f_2$ be the pull-back of the classes of two rulings of the quadric and let $e_1,\dots,e_9$ be the nine exceptional curves. These classes form a basis of $\Pic(Q)$. Observe that $Q$ is the blow-up of $\pp^2$ at $10$ points, with basis of the Picard group given by:
\[
  f_1+f_2-e_1,\quad f_1-e_1, \quad f_2-e_1, \quad e_2,\dots, e_9.
\]
Since $e_i={E_i}_{|Q}$ and $f_1+f_2=H_{|Q}$, then the class $z$ of $D_{|Q}$ has degree and multiplicities with respect to this basis given by:
\[
  2d-m_1,\quad d-m_1,\quad d-m_1,\quad m_2,\dots,m_9.
\]
If $m_4\leq d-m_1<m_3$, then either $z$ or $\sigma(z)$ is in standard form after reordering the multiplicities in decreasing order.
In the remaining cases $z$ is in standard form after reordering the multiplicities in decreasing order.
We conclude by Theorem~\ref{standard}.


\end{proof}

Thus, assuming the S.H.G.H. conjecture to be true for 10 points of $\pp^2$ in very general position, we deduce that $D_{|Q}$ is non-special, or equivalently that
$h^0(D_{|Q})\cdot h^1(D_{|Q})=0$. 
We define $q(D):=\chi(D_{|Q})$ and observe that
\[
  q(D)=(d+1)^2-\frac{1}{2}\sum_{i=1}^9m_i(m_i+1).
\]

\begin{proposition}
Assume that the S.H.G.H. conjecture is true for 10 points of $\pp^2$ in very general position. If $q(D)\leq 0$, then $h^0(D)=h^0(D-Q)$.
\end{proposition}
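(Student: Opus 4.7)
The plan is to extract the conclusion from the short exact sequence of sheaves
\[
  0 \to \Osh_X(D-Q) \to \Osh_X(D) \to \Osh_Q(D_{|Q}) \to 0,
\]
obtained by tensoring the standard sequence $0 \to \Osh_X(-Q) \to \Osh_X \to \Osh_Q \to 0$ with $\Osh_X(D)$. Taking the associated long exact sequence in cohomology, it will be enough to show that $h^0(D_{|Q}) = 0$, since this yields $h^0(D-Q) = h^0(D)$ immediately.

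To obtain that vanishing, I would combine the previous proposition with the hypothesis $q(D) \leq 0$. The previous proposition (invoking the S.H.G.H.\ conjecture for 10 points of $\pp^2$) shows that $D_{|Q}$ is non-special, i.e.\ $h^0(D_{|Q}) \cdot h^1(D_{|Q}) = 0$. Since $q(D) = \chi(D_{|Q}) \leq 0$, the equality $h^0(D_{|Q}) - h^1(D_{|Q}) = q(D) \leq 0$ together with non-speciality forces $h^0(D_{|Q}) = 0$: indeed, if $h^0(D_{|Q}) > 0$, then $h^1(D_{|Q}) = 0$, giving $h^0(D_{|Q}) = q(D) \leq 0$, a contradiction.

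With $h^0(D_{|Q}) = 0$ in hand, the initial segment of the long exact sequence
\[
  0 \to H^0(X, D-Q) \to H^0(X, D) \to H^0(Q, D_{|Q})
\]
collapses to an isomorphism $H^0(X, D-Q) \cong H^0(X, D)$, which is exactly the desired equality $h^0(D) = h^0(D-Q)$.

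The main (minor) obstacle is really just Step~2 above: one must observe that Riemann--Roch on $Q$ gives $h^0(D_{|Q}) - h^1(D_{|Q}) = \chi(D_{|Q}) = q(D)$, so that non-speciality combined with $q(D) \leq 0$ rules out $h^0(D_{|Q}) > 0$. Everything else is the formal short-exact-sequence argument; the heavy lifting has already been done in the preceding proposition guaranteeing non-speciality of $D_{|Q}$.
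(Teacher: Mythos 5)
Your argument is correct and is essentially the paper's own proof, just written out in full: non-speciality of $D_{|Q}$ (from the preceding proposition) together with $\chi(D_{|Q})=q(D)\leq 0$ forces $h^0(D_{|Q})=0$, and the restriction sequence then gives $h^0(D)=h^0(D-Q)$. The only point you elide is that $h^0(D_{|Q})-h^1(D_{|Q})=\chi(D_{|Q})$ requires $h^2(D_{|Q})=0$, which is where the paper's appeal to Serre duality comes in.
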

\begin{proof}
By Riemann-Roch, Serre's duality and the fact that $D_{|Q}$ is non-special, we deduce $h^0(D_{|Q})=0$, so that $h^0(D-Q)=h^0(D)$ which proves the thesis.
\end{proof}

The following conjecture has been formulated for the first time in~\cite{LaUg1}.

\begin{conjecture}\label{con1}
Let $X$ be the blow-up of $\pp^3$ at a finite number of points in very general position and let $D:=dH-\sum_im_iE_i$ be in standard form.
\begin{enumerate}
\item If $q(D)\leq 0$, then $h^0(D)=h^0(D-Q)$.
\item If $q(D) > 0$, then $D$ is special if and only if $d<m_1+m_2-1$ and 
\[
  h^0(D)=\binom{d+3}{3}-\sum_{i=1}^r\binom{m_i+2}{3}
  +\sum_{m_i+m_j>d+1} \binom{m_i+m_j-d+1}{3}.
\]
\end{enumerate}
\end{conjecture}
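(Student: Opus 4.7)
Part (1) is already reduced to the S.H.G.H.\ conjecture for $10$ general points of $\pp^2$ by the proposition preceding the statement, so the real task is Part (2) with $q(D)>0$. First I would unpack the numerics. Since $D$ is in standard form, the inequality $d\geq m_1+m_2-1$ is equivalent to $D\cdot L_{ij}\geq -1$ for every line $L_{ij}$ joining two base points; in that regime the sum over pairs in the proposed formula is empty, and the claim reduces to $h^0(D)=\chi(D)$. In the opposite regime $d<m_1+m_2-1$, each pair with $m_i+m_j\geq d+2$ forces $L_{ij}$ into $\Bs|D|$ with multiplicity $a_{ij}:=m_i+m_j-d-1$, and the term $\binom{a_{ij}+2}{3}$ is precisely the codimension expected from a fat line of that multiplicity.

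My plan would be a double induction (on $d$, and secondarily on $m_1$) combined with three reductions. First, by Propositions~\ref{pre} and~\ref{preserve} I can replace $D$ by a $W(X)$-image in pre-standard form without changing $h^0$, so it suffices to treat the standard case. Second, when $q(D)>0$, I would peel off the quadric via
\[
0\to\Osh_X(D-Q)\to\Osh_X(D)\to\Osh_Q(D_{|Q})\to 0,
\]
using $h^0(D_{|Q})=q(D)$ (already established through the non-speciality of $D_{|Q}$) together with a vanishing of $h^1(D-Q)$ that should follow inductively, to get $h^0(D)=h^0(D-Q)+q(D)$ and recur on the class $D-Q$. Third, when lines with $D\cdot L_{ij}\leq -2$ exist, I would pass to the blow-up $\widetilde X$ of $X$ along those lines, strip off the excess exceptional contributions, and apply the inductive hypothesis to the resulting class, whose intersection with every such line is $\geq -1$; translating back to $X$ would produce the binomial corrections $\binom{a_{ij}+2}{3}$.

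The main obstacle will be this third step: controlling the strict transforms simultaneously under blow-up along several lines that may mutually intersect or be coplanar, and verifying that the interplay between the fat lines $L_{ij}$ and the fat quadric $Q$ introduces no speciality beyond what the formula predicts. This is also the point at which the two-dimensional picture (the S.H.G.H.\ conjecture) becomes a genuinely open problem, since lines through two points of $\pp^3$ play the role that $(-1)$-curves play in $\pp^2$. I would therefore expect a complete argument to require either a degeneration argument in the spirit of Ciliberto--Miranda, or a delicate Koszul/syzygy analysis of the expected base locus, and this is where I would anticipate the real bottleneck of the plan.
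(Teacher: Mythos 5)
The statement you are addressing is Conjecture~\ref{con1}; the paper does not prove it. It is stated as an open conjecture, with only partial results cited ($r\leq 8$ and arbitrary multiplicities in \cite{DVLa}, $m_i\leq 4$ and arbitrary $r$ in \cite{BaBr} and \cite{Du1}). So there is no proof in the paper to compare against, and your proposal cannot be judged "correct" as a proof. That said, your numerical translations are accurate: for the strict transform $L_{ij}$ of the line through $p_i$ and $p_j$ one has $D\cdot L_{ij}=d-m_i-m_j$, so $d\geq m_1+m_2-1$ is exactly the condition $D\cdot L_{ij}\geq -1$ for all pairs; the correction term $\binom{m_i+m_j-d+1}{3}$ is the expected codimension imposed by a fat line of multiplicity $m_i+m_j-d-1$; and the first two summands of the displayed formula are $\chi(D)$. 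Your outline --- reduce to standard form via $W(X)$ using Propositions~\ref{pre} and~\ref{preserve}, peel off the quadric $Q$ via the restriction sequence, and account for the lines with $D\cdot L_{ij}\leq -2$ by blowing them up --- is precisely the heuristic underlying the conjecture and the strategy employed in the cited partial results.

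The genuine gap is the one you flag yourself, and it is not a technical detail but the entire mathematical content of the conjecture: proving that no speciality arises beyond the contributions of the fat lines $L_{ij}$ and the quadric $Q$, in particular when several such lines meet each other or lie on $Q$. Moreover, your second reduction already assumes what must be proved: to get $h^0(D)=h^0(D-Q)+q(D)$ when $q(D)>0$ you need $h^1(D-Q)=0$ (or at least surjectivity of the restriction to $Q$), and that vanishing is not supplied by anything in the paper --- it is essentially equivalent to the non-speciality assertion of part~(2) for the class $D-Q$. The paper itself only establishes the much weaker ingredients you correctly cite (non-speciality of $D_{|Q}$ assuming S.H.G.H.\ for $10$ points, giving part~(1) conditionally). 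So your proposal is a faithful reading of what the conjecture asserts and a reasonable plan of attack, but the inductive engine it relies on is exactly the open problem, and no argument in the paper or in the current literature closes it.
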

Conjecture~\ref{con1} has been proved for $r\leq 8$ and any multiplicities (see~\cite{DVLa}), for $m_i\leq 4$ and any $r$ (see~\cite{BaBr} and~\cite{Du1}).
Observe that if $m_2+m_3>d+1$, then $m_1+m_4<d-1$ since $D$ is in standard form. Hence in this case the sum on the right hand side of (2) is on the pairs $(m_1,m_2)$, $(m_1,m_3)$, $(m_2,m_3)$. If $m_2+m_3<d+1$,then the sum is over all the pairs $(m_1,m_i)$, such that $m_1+m_i>d+1$.

\subsection{An algorithm based on Conjecture~\ref{con1}(1)}
The Maple program {\tt quad} (see~\cite{LaUg2}), given $D:=dH-\sum_im_iE_i$ returns a standard class $D'=d'H-\sum_im_i'E_i$ with $h^0(D')=h^0(D)$ and $q(D')>0$.
\begin{enumerate}\leftskip 15mm
\item[INPUT =] $[d,m_1,\dots,m_r]$.
\item[OUTPUT =] $[d',m_1',\dots,m_r']$.
\end{enumerate}

Here is a Maple session.
\begin{verbatim}
> with(StdClass):
> quad([19,9,9,9,9,9,9,9,9,9]);               
  [15, 7, 7, 7, 7, 7, 7, 7, 7, 7]
\end{verbatim}

\subsection{An algorithm for calculating $h^0(D)$}

The Maple program {\tt dim3} (see~\cite{LaUg2}), given $D:=dH-\sum_im_iE_i$, returns $h^0(D)$ according to Conjecture~\ref{con1}. It makes use of the function {\tt quad}.
\begin{enumerate}\leftskip 15mm
\item[INPUT =] $[d,m_1,\dots,m_r]$.
\item[OUTPUT =] $h^0(D)$.
\end{enumerate}
Here is a Maple session.
\begin{verbatim}
> with(StdClass):
> dim3([19,9,9,9,9,9,9,9,9,9]);
  60
\end{verbatim}

\section{examples}
In this section we consider several examples of effective classes $D\in\Pic(X)$, where $X$ is the blow-up of $\pp^n$ at points in very general position. Denote by 
\[
  L_3(d;m_1^{a_1},\dots,m_s^{a_s})
\]
the class of the strict transform of a hypersurface of degree $d$ through $a_i$ points of multiplicity $\geq m_i$, for $i=1,\dots, s$.

\subsection{Pre-standard form}
Consider the class $D:=L_n(n+1;n^{n+1})$. Then $\sigma(D)=L_n(0;(-1)^{n+1})$ is in pre-standard form. This proves that $D$ is sum of $(-1)$-classes.
Here we run a Maple test when $n=5$.
\begin{verbatim}
> Std(5,[6,[5,6]]);
  [0, [-1, 6]]
\end{verbatim}

\subsection{Dimension $2$}
According to the S.H.G.H. conjecture or its equivalent statement given in Theorem~\ref{equiv}(2), the class $L_2(d;m^r)$ is non-special if $d\geq 3m$. If $d<3m$ and $r>8$, then it is non-effective since its pre-standard form has negative degree. If 
$r\leq 8$, then we can have a special class, like for example $D:=L_2(96;34^8)$. We expect $D$ to be non-effective but we have
\begin{verbatim}
> Dim2([96,[34,8]]); 
  1
\end{verbatim}
As shown by the following
\begin{verbatim}
> Std(2,[96,[34,8]]);
  [0, [-2, 8]]
\end{verbatim}
we have that $w(D)=\sum_i2E_i$ so that $D$ is a sum of $(-1)$-classes as well.

\subsection{Dimension $3$}
The class $L_3(d;m^r)$ is in standard form if $d\geq 2m\geq 0$. In this case, according to Conjecture~\ref{con1}, it is non-special if $(d+1)^2-\frac{9}{2}m(m+1)>0$
or equivalently if
\[
  d > -1+\frac{3\sqrt{2m^2+2m}}{2}.
\]
An example of a class $D$ in standard form with $q(D)\leq 0$ is $L_3(2m+1;m^9)$,
with $m\geq 8$. In this case $h^0(D)=60$ does not depend on $m$, even if the expected dimension does.
\begin{verbatim}
> Quad([19,[9,9]]);                                      
  [15, [7, 9]]
\end{verbatim}
The expected dimension of this class is $55$, while we have:
\begin{verbatim}
> Dim3([19,[9,9]]);
  60
\end{verbatim}

\begin{bibdiv}
\begin{biblist}

\bib{BaBr}{article}{
    AUTHOR = {Ballico, Edoardo},
    AUTHOR = {Brambilla, Maria Chiara},
     TITLE = {Postulation of general quartuple fat point schemes in {${\bf
              P}\sp 3$}},
   JOURNAL = {J. Pure Appl. Algebra},
  FJOURNAL = {Journal of Pure and Applied Algebra},
    VOLUME = {213},
      YEAR = {2009},
    NUMBER = {6},
     PAGES = {1002--1012},
      ISSN = {0022-4049},
     CODEN = {JPAAA2},
   MRCLASS = {14N05 (14C20)},
  MRNUMBER = {MR2498792 (2010b:14106)},
MRREVIEWER = {Roberto Mu{\~n}oz},
    EPRINT = {http://dx.doi.org/10.1016/j.jpaa.2008.11.001},
}

\bib{CiMi}{article}{
    AUTHOR = {Ciliberto, Ciro},
    AUTHOR = {Miranda, Rick},
     TITLE = {The {S}egre and {H}arbourne-{H}irschowitz conjectures},
 BOOKTITLE = {Applications of algebraic geometry to coding theory, physics
              and computation ({E}ilat, 2001)},
    SERIES = {NATO Sci. Ser. II Math. Phys. Chem.},
    VOLUME = {36},
     PAGES = {37--51},
 PUBLISHER = {Kluwer Acad. Publ.},
   ADDRESS = {Dordrecht},
      YEAR = {2001},
   MRCLASS = {14C20 (14E05 14J26)},
  MRNUMBER = {MR1866894 (2002k:14011)},
MRREVIEWER = {Cindy De Volder},
}

\bib{DVLa}{article}{
    AUTHOR = {De Volder, Cindy},
    AUTHOR = {Laface, Antonio},
     TITLE = {On linear systems of {$\Bbb P\sp 3$} through multiple points},
   JOURNAL = {J. Algebra},
  FJOURNAL = {Journal of Algebra},
    VOLUME = {310},
      YEAR = {2007},
    NUMBER = {1},
     PAGES = {207--217},
      ISSN = {0021-8693},
     CODEN = {JALGA4},
   MRCLASS = {14C20},
  MRNUMBER = {MR2307790 (2008b:14009)},
MRREVIEWER = {Luis Fuentes Garc{\'{\i}}a},
    EPRINT = {http://dx.doi.org/10.1016/j.jalgebra.2006.12.003},
}

\bib{Do}{article}{
    AUTHOR = {Dolgachev, Igor V.},
     TITLE = {Weyl groups and {C}remona transformations},
 BOOKTITLE = {Singularities, {P}art 1 ({A}rcata, {C}alif., 1981)},
    SERIES = {Proc. Sympos. Pure Math.},
    VOLUME = {40},
     PAGES = {283--294},
 PUBLISHER = {Amer. Math. Soc.},
   ADDRESS = {Providence, RI},
      YEAR = {1983},
   MRCLASS = {14E07 (14J17)},
  MRNUMBER = {MR713067 (85e:14018)},
MRREVIEWER = {Henry C. Pinkham},
}

\bib{Gi}{book}{
    AUTHOR = {Gimigliano, Alessandro},
     TITLE = {On linear systems of plane curves},
 BOOKTITLE = {Ph.D. thesis, Queen's University},
      YEAR = {1987},
}

\bib{DV}{article}{
    AUTHOR = {Du Val, Patrick},
     TITLE = {Crystallography and {C}remona transformations},
 BOOKTITLE = {The geometric vein},
     PAGES = {191--201},
 PUBLISHER = {Springer},
   ADDRESS = {New York},
      YEAR = {1981},
   MRCLASS = {52A25 (14E07 20H15 51N35)},
  MRNUMBER = {MR661778 (84h:52013)},
}

\bib{Du1}{article}{
    AUTHOR = {Dumnicki, Marcin},
     TITLE = {On hypersurfaces in {$\Bbb P\sp 3$} with fat points in general
              position},
   JOURNAL = {Univ. Iagel. Acta Math.},
  FJOURNAL = {Universitatis Iagellonicae. Acta Mathematica},
    NUMBER = {46},
      YEAR = {2008},
     PAGES = {15--19},
      ISSN = {0083-4386},
      ISBN = {978-83-233-2719-6},
   MRCLASS = {14C20},
  MRNUMBER = {MR2553357},
}

\bib{Du2}{article}{
    AUTHOR = {Dumnicki, Marcin},
     TITLE = {An algorithm to bound the regularity and nonemptiness of
              linear systems in {$\Bbb P\sp n$}},
   JOURNAL = {J. Symbolic Comput.},
  FJOURNAL = {Journal of Symbolic Computation},
    VOLUME = {44},
      YEAR = {2009},
    NUMBER = {10},
     PAGES = {1448--1462},
      ISSN = {0747-7171},
   MRCLASS = {14C20 (14Qxx)},
  MRNUMBER = {MR2543429},
    EPRINT = {http://dx.doi.org/10.1016/j.jsc.2009.04.005},
}

\bib{Ha}{article}{
    AUTHOR = {Harbourne, Brian},
     TITLE = {The geometry of rational surfaces and {H}ilbert functions of
              points in the plane},
 BOOKTITLE = {Proceedings of the 1984 {V}ancouver conference in algebraic
              geometry},
    SERIES = {CMS Conf. Proc.},
    VOLUME = {6},
     PAGES = {95--111},
 PUBLISHER = {Amer. Math. Soc.},
   ADDRESS = {Providence, RI},
      YEAR = {1986},
   MRCLASS = {14J26 (14C25)},
  MRNUMBER = {MR846019 (87k:14041)},
MRREVIEWER = {C. A. M. Peters},
}

\bib{Hi}{article}{
    AUTHOR = {Hirschowitz, Andr{\'e}},
     TITLE = {Une conjecture pour la cohomologie des diviseurs sur les
              surfaces rationnelles g\'en\'eriques},
   JOURNAL = {J. Reine Angew. Math.},
  FJOURNAL = {Journal f\"ur die Reine und Angewandte Mathematik},
    VOLUME = {397},
      YEAR = {1989},
     PAGES = {208--213},
      ISSN = {0075-4102},
     CODEN = {JRMAA8},
   MRCLASS = {14J26 (14C20)},
  MRNUMBER = {MR993223 (90g:14021)},
MRREVIEWER = {L. B{\u{a}}descu},
    EPRINT = {http://dx.doi.org/10.1515/crll.1989.397.208},
}

\bib{LaUg1}{article}{
    AUTHOR = {Laface, Antonio}
    AUTHOR = {Ugaglia, Luca},
     TITLE = {On a class of special linear systems of {$\Bbb P\sp 3$}},
   JOURNAL = {Trans. Amer. Math. Soc.},
  FJOURNAL = {Transactions of the American Mathematical Society},
    VOLUME = {358},
      YEAR = {2006},
    NUMBER = {12},
     PAGES = {5485--5500 (electronic)},
      ISSN = {0002-9947},
     CODEN = {TAMTAM},
   MRCLASS = {14C20},
  MRNUMBER = {MR2238923 (2007e:14009)},
MRREVIEWER = {Adrian Langer},
    EPRINT = {http://dx.doi.org/10.1090/S0002-9947-06-03891-8},
}

\bib{LaUg2}{article}{
    AUTHOR = {Laface, Antonio}
    AUTHOR = {Ugaglia, Luca},
     TITLE = {A maple package for classes on the blow-up of $\mathbb{P}^n$},
     YEAR = {2010},
   EPRINT = {http://www2.udec.cl/~alaface/software/index.html},
}

\bib{Mu}{article}{
    AUTHOR = {Mukai, Shigeru},
     TITLE = {Geometric realization of {$T$}-shaped root systems and
              counterexamples to {H}ilbert's fourteenth problem},
 BOOKTITLE = {Algebraic transformation groups and algebraic varieties},
    SERIES = {Encyclopaedia Math. Sci.},
    VOLUME = {132},
     PAGES = {123--129},
 PUBLISHER = {Springer},
   ADDRESS = {Berlin},
      YEAR = {2004},
   MRCLASS = {13A50 (14C22 14M20 17B20 17B67)},
  MRNUMBER = {MR2090672 (2005h:13008)},
MRREVIEWER = {Dmitrii A. Timash{\"e}v},
}

\bib{Se}{article}{
    AUTHOR = {Segre, Beniamino},
     TITLE = {Alcune questioni su insiemi finiti di punti in geometria
              algebrica},
 BOOKTITLE = {Atti {C}onvegno {I}nternaz. {G}eometria {A}lgebrica ({T}orino,
              1961)},
     PAGES = {15--33},
 PUBLISHER = {Rattero, Turin},
      YEAR = {1962},
   MRCLASS = {14.18 (50.00)},
  MRNUMBER = {MR0146714 (26 \#4234)},
MRREVIEWER = {J. A. Todd},
}


\end{biblist}
\end{bibdiv}

\end{document}